\documentclass[a4paper,11pt]{amsart}

\usepackage{amsmath, amsthm, amsfonts, amssymb}
\usepackage[bookmarks=false]{hyperref}
\usepackage{enumerate}

\usepackage{color}

\numberwithin{equation}{section}

\setcounter{tocdepth}{1}

\setlength{\evensidemargin}{0pt}
\setlength{\oddsidemargin}{0pt}
\setlength{\topmargin}{-10pt}
\setlength{\textheight}{670pt}
\setlength{\textwidth}{450pt}
\setlength{\headsep}{20pt}
\setlength{\footskip}{30pt}
\setlength{\parindent}{0pt}
\setlength{\parskip}{1ex plus 0.5ex minus 0.2ex}

\newtheorem*{mainthm}{Main theorem}

\newtheorem{thm}{Theorem}[section]
\newtheorem{lem}[thm]{Lemma}

\newtheorem{cor}[thm]{Corollary}
\newtheorem{prop}[thm]{Proposition}

\theoremstyle{definition}
\newtheorem{rem}[thm]{Remark}

\newtheorem{df}[thm]{Definition}

\newtheorem*{definition}{Definition}

\newcommand{\R}{\mathbf{R}}
\newcommand{\C}{\mathbf{C}}

\newcommand{\F}{\mathbf{F}}

\newcommand{\N}{\mathbf{N}}

\newcommand{\cF}{\mathcal{F}}
\newcommand{\cZ}{\mathcal{Z}}
\newcommand{\cU}{\mathcal{U}}

\newcommand{\cN}{\mathcal{N}}
\newcommand{\cQ}{\mathcal{Q}}
\newcommand{\QN}{\mathcal{QN}}

\newcommand{\Ad}{\operatorname{Ad}}
\newcommand{\id}{\text{\rm id}}

\newcommand{\Aut}{\operatorname{Aut}}
\newcommand{\rL}{\mathord{\text{\rm L}}}
\newcommand{\rB}{\mathord{\text{\rm B}}}

\newcommand{\rE}{\mathord{\text{\rm E}}}

\newcommand{\core}{\mathord{\text{\rm c}}}

\newcommand{\cb}{\mathord{\text{\rm cb}}}

\newcommand{\Tr}{\mathord{\text{\rm Tr}}}

\newcommand{\ot}{\otimes}
\newcommand{\ovt}{\mathbin{\overline{\otimes}}}
\newcommand{\otalg}{\otimes_{\text{\rm alg}}}
\newcommand{\op}{^\text{\rm op}}

\newcommand{\Mtil}{\widetilde{M}}
\newcommand{\al}{\alpha}
\newcommand{\dpr}{^{\prime\prime}}
\newcommand{\actson}{\curvearrowright}
\newcommand{\sN}{s\mathcal{N}}

\newcommand{\Qtil}{\widetilde{Q}}
\newcommand{\cV}{\mathcal{V}}
\newcommand{\cRtil}{\widetilde{\mathcal{R}}}
\newcommand{\recht}{\rightarrow}
\newcommand{\cR}{\mathcal{R}}
\newcommand{\vphi}{\varphi}
\newcommand{\Xtil}{\widetilde{X}}
\newcommand{\Ptil}{\widetilde{P}}
\newcommand{\abar}{\overline{a}}
\newcommand{\ubar}{\overline{u}}
\newcommand{\xbar}{\overline{x}}
\newcommand{\vbar}{\overline{v}}
\newcommand{\om}{\omega}
\newcommand{\be}{\beta}
\newcommand{\eps}{\varepsilon}
\newcommand{\real}{\operatorname{Re}}
\newcommand{\cS}{\mathcal{S}}
\newcommand{\Om}{\Omega}
\newcommand{\SO}{\operatorname{SO}}
\newcommand{\SU}{\operatorname{SU}}
\newcommand{\cG}{\mathcal{G}}

\begin{document}

\title{Strong solidity of free Araki--Woods factors}

\begin{abstract}
We show that Shlyakhtenko's free Araki--Woods factors are strongly solid, meaning that for any diffuse amenable von Neumann subalgebra that is the range of a normal conditional expectation, the normalizer remains amenable. This provides the first class of nonamenable strongly solid type ${\rm III}$ factors.
\end{abstract}

\author{R\'emi Boutonnet}
\address{Institut de Math\'ematiques de Bordeaux \\ CNRS \\ Universit\'e Bordeaux I \\ 33405 Talence \\ FRANCE}
\email{remi.boutonnet@math.u-bordeaux.fr}
\thanks{RB is supported by NSF Career Grant DMS 1253402}

\author{Cyril Houdayer}
\address{Laboratoire de Math\'ematiques d'Orsay \\ Universit\'e Paris-Sud \\ CNRS \\ Universit\'e Paris-Saclay \\ 91405 Orsay \\ FRANCE}
\email{cyril.houdayer@math.u-psud.fr}
\thanks{CH is supported by ERC Starting Grant GAN 637601}

\author{Stefaan Vaes}
\address{Department of Mathematics\\KU Leuven\\Celestijnenlaan 200B\\B-3001 Leuven\\BELGIUM}
\email{stefaan.vaes@kuleuven.be}
\thanks{SV is supported by ERC Consolidator Grant 614195, and by long term structural funding~-- Methusalem grant of the Flemish Government}

\subjclass[2010]{46L10, 46L54, 46L36}
\keywords{Free group factors; Free Araki--Woods factors; Popa's deformation/rigidity theory; Strong solidity; Type ${\rm III}$ factors}

\maketitle

\section{Introduction}

A von Neumann algebra $N$ is {\em amenable} if there exists a norm one projection $\Phi : \rB(\rL^2(N)) \to N$. By Connes's fundamental result \cite{Co75}, any amenable von Neumann algebra is approximately finite dimensional. Moreover, the class of amenable factors with separable predual is completely classified by the flow of weights \cite{Co72, Co75, Ha85, Kr75}. In particular, there exists a unique amenable ${\rm II_1}$ factor with separable predual: it is the hyperfinite ${\rm II_1}$ factor $R$ of Murray and von Neumann \cite{MvN43}.

Starting with \cite{Po01}, Popa's deformation/rigidity theory has lead to far reaching classification and structure theorems for nonamenable factors. Particular attention was given to several types of indecomposability results for von Neumann algebras $M$, like primeness (the impossibility to write a factor as a nontrivial tensor product), solidity (inside $M$, there is no room for a nonamenable subalgebra and a diffuse subalgebra to commute) and absence of Cartan subalgebras (the impossibility to write a factor as one coming from a group action or an equivalence relation). The strongest possible indecomposability property for a von Neumann algebra $M$, encompassing primeness, solidity and the absence of Cartan subalgebras was discovered in Ozawa and Popa's breakthrough article \cite{OP07} and called \emph{strong solidity.}

\begin{definition}
Let $M$ be any diffuse von Neumann algebra. Following \cite{OP07}, we say that $M$ is {\em strongly solid} if for any diffuse amenable von Neumann subalgebra $Q \subset M$ with faithful normal conditional expectation $\rE_Q : M \to Q$, the normalizer $\mathcal N_M(Q)\dpr$ generated by $\mathcal N_M(Q) := \{u \in \mathcal U(M) \mid uQu^* = Q\}$ remains amenable.
\end{definition}

In \cite{OP07}, the free group factors $\rL(\F_n)$, $2 \leq n \leq +\infty$, were shown to be strongly solid. This result strengthens both Voiculescu's \cite{Vo95} proving that the free group factors have no Cartan subalgebra and Ozawa's \cite{Oz03} proving that the free group factors are solid.

The type ${\rm III}$ counterparts of the free group factors are Shlyakhtenko's free Araki--Woods factors \cite{Sh96}, defined via Voiculescu's free Gaussian functor \cite{Vo85, VDN92}. Although free Araki--Woods factors were shown to be solid and to have no Cartan subalgebra \cite{HR10}, strong solidity remained an open problem. So far, there were even no examples of strongly solid type ${\rm III}$ factors altogether. As we explain in detail below, the strong solidity of a type ${\rm III}$ factor $M$ is closely related to a relative strong solidity property of its continuous core $\core(M)$, which is a semifinite von Neumann algebra. The main results of \cite{OP07} apply to the finite corners $p \core(M) p$. But, in order to be applicable to $\core(M)$, we need to control, inside $p \core(M) p$, not only normalizers but also so-called groupoid-normalizers or stable normalizers. That is precisely the problem that we solve in the present paper and that allows us to prove that all free Araki--Woods factors are strongly solid.

Following \cite{Sh96}, to any orthogonal representation $U : \R \curvearrowright H_\R$ on a real Hilbert space, one associates the free Araki--Woods von Neumann algebra $\Gamma(H_\R, U)\dpr$, which comes equipped with the {\it free quasi-free state} $\varphi_U$ (see Section \ref{preliminaries} for a detailed construction).

Free Araki--Woods factors were first studied in the framework of Voiculescu's free probability theory. A complete description of their type classification as well as fullness and computation of Connes's Sd and $\tau$ invariants was obtained in \cite{Sh96, Sh97a, Sh97b, Sh02} (see also the survey \cite{Va04}). We have $\Gamma(H_\R, \id)\dpr \cong \rL(\F_{\dim (H_\R)})$ when $U = 1_{H_\R}$ and $\Gamma(H_\R, U)\dpr$ is a full type ${\rm III}$ factor when $U \neq 1_{H_\R}$. Moreover, the free Araki--Woods factor $\Gamma(H_\R, U)\dpr$ admits a discrete decomposition in the sense of \cite{Co74} if and only if the orthogonal representation $U : \R \curvearrowright H_\R$ is almost periodic. The class of free Araki--Woods factors is quite large. Indeed, there are uncountably many pairwise nonisomorphic type ${\rm III_1}$ free Araki--Woods factors that admit a discrete decomposition \cite{Sh96} as well as uncountably many that do not \cite{Sh02}.
More recently, free Araki--Woods factors were studied using Popa's deformation/rigidity theory. This new approach allowed to obtain various indecomposability results in \cite{Ho08, HR14} and to show that free Araki--Woods factors satisfy the complete metric approximation property (CMAP) \cite[Theorem A]{HR10} and have no Cartan subalgebra \cite[Theorem B]{HR10}.

The following is then our main result.

\begin{mainthm}
For every orthogonal representation $U : \R \curvearrowright H_\R$ such that $\dim H_\R \geq 2$, the free Araki--Woods factor $\Gamma(H_\R, U)\dpr$ is strongly solid.
\end{mainthm}

The main step to prove this result is to adapt the proof of Ozawa--Popa's \cite[Theorem 3.5]{OP07} so as to cover as well the \emph{groupoid-normalizer} or \emph{stable normalizer} of $Q \subset M$, defined as the von Neumann algebra generated by $\{x \in M \mid x Q x^* \subset Q \;\;\text{and}\;\; x^* Q x \subset Q \}$. We thus prove in particular that for any diffuse amenable $Q \subset \rL(\F_n)$, the stable normalizer of $Q$ remains amenable.

To prove that a free Araki--Woods factor $M = \Gamma(H_\R, U)\dpr$ is strongly solid, we proceed as follows. Fix a diffuse amenable von Neumann subalgebra $Q \subset M$ with expectation (meaning that there exists a faithful normal conditional expectation $\rE_Q : M \to Q$). We have to prove that $P := \cN_M(Q)\dpr$ remains amenable.

Using Connes's continuous decomposition \cite{Co72}, we have natural inclusions of the semifinite continuous cores $\core(Q) \subset \core(P) \subset \core(M)$. In general, it is not true that $\core(P)$ is contained in the normalizer of $\core(Q)$. However, since $M$ is solid (see e.g.\ \cite[Theorem A]{HR14}), we may replace $Q$ by $Q \vee (Q' \cap M)$ and assume that $Q' \cap M = \cZ(Q)$. Then, $\core(P)$ is contained in the normalizer of $\core(Q)$ (see Lemma \ref{lem-modular}) and by Takesaki's duality theorem \cite[Theorem X.2.3]{Ta03}, it suffices to show that $\mathcal N_{\core(M)}(\core(Q))\dpr$ is amenable.

Cutting down by any nonzero finite projection in $\core(Q)$, we obtain the inclusions of finite (tracial) von Neumann algebras $\mathcal Q \subset \mathcal P \subset \mathcal M$ where $\mathcal Q = p \core(Q) p$, $\mathcal P = p (\mathcal N_{\core(M)}(\core(Q))\dpr) p$ and $\mathcal M = p \core(M) p$. It is important to point out that $\mathcal P$ need not be contained in the normalizer of $\mathcal Q$, but is always contained in the stable normalizer of $\mathcal Q$.

By \cite[Theorem A]{HR10}, the tracial von Neumann algebra $\mathcal M$ has CMAP and it has a natural malleable deformation in the sense of \cite{Po03}. So we are exactly in the setting of \cite{OP07}, except that we need to extend their main result on weak compactness to stable normalizers. We do this in Proposition \ref{prop.WC}.

\subsection*{Acknowlegment} R.B.\ is grateful to Adrian Ioana for many stimulating discussions on this project.

\section{Preliminaries}\label{preliminaries}

\subsection{Background on $\sigma$-finite von Neumann algebras}

For any von Neumann algebra $M$, we denote by $\mathcal Z(M)$ the center of $M$, by $\mathcal U(M)$ the group of unitaries in $M$ and by $(M, \rL^2(M), J, \rL^2(M)^+)$ the standard form of $M$. We say that an inclusion of von Neumann algebras $P \subset M$ is {\em with expectation} if there exists a faithful normal conditional expectation $\rE_P : M \to P$. We say that a $\sigma$-finite von Neumann algebra $M$ is {\em tracial} if it is endowed with a faithful normal tracial state $\tau$.

Let $M$ be any $\sigma$-finite von Neumann algebra with predual $M_\ast$ and $\varphi \in M_\ast$ any faithful state. We denote by $\sigma^\varphi$ the modular automorphism group of the state  $\varphi$.  The {\em continuous core} of $M$ with respect to $\varphi$, denoted by $\core_\varphi(M)$, is the crossed product von Neumann algebra $M \rtimes_{\sigma^\varphi} \R$.  The natural inclusion $\pi_\varphi: M \to \core_\varphi(M)$ and the unitary representation $\lambda_\varphi: \R \to \core_\varphi(M)$ satisfy the {\em covariance} relation
\[\lambda_\varphi(t) \pi_\varphi(x) \lambda_\varphi(t)^* = \pi_\varphi(\sigma^\varphi_t(x)) \quad \text{ for all } x \in M \text{ and all } t \in \R.\]

Put $\rL_\varphi (\R) = \lambda_\varphi(\R)\dpr$. There is a unique faithful normal conditional expectation $\rE_{\rL_\varphi (\R)}: \core_{\varphi}(M) \to \rL_\varphi(\R)$ satisfying $\rE_{\rL_\varphi (\R)}(\pi_\varphi(x) \lambda_\varphi(t)) = \varphi(x) \lambda_\varphi(t)$ for all $x \in M$ and all $t \in \R$. The faithful normal semifinite (fns) weight defined by $f \mapsto \int_{\R} \exp(-s)f(s) \, {\rm d}s$ on $\rL^\infty(\R)$ gives rise to a fns weight $\Tr_\varphi$ on $\rL_\varphi(\R)$ via the Fourier transform. The formula $\Tr_\varphi = \Tr_\varphi \circ \rE_{\rL_\varphi (\R)}$ extends it to a fns trace on $\core_\varphi(M)$.

Because of Connes's Radon--Nikodym cocycle theorem \cite[Th\'eor\`eme 1.2.1]{Co72} (see also \cite[Theorem VIII.3.3]{Ta03}), the semifinite von Neumann algebra $\core_\varphi(M)$ together with its trace $\Tr_\varphi$ does not depend on the choice of $\varphi$ in the following precise sense. If $\psi \in M_\ast$ is another faithful state, there is a canonical surjective $\ast$-isomorphism
$\Pi_{\varphi,\psi} : \core_\psi(M) \to \core_{\varphi}(M)$ such that $\Pi_{\varphi,\psi} \circ \pi_\psi = \pi_\varphi$ and $\Tr_\varphi \circ \Pi_{\varphi,\psi} = \Tr_\psi$. Note however that $\Pi_{\varphi,\psi}$ does not map the subalgebra $\rL_\psi(\R) \subset \core_\psi(M)$ onto the subalgebra $\rL_\varphi(\R) \subset \core_\varphi(M)$ (and hence we use the symbol $\rL_\varphi(\R)$ instead of the usual $\rL(\R)$).

\subsection{Free Araki--Woods factors}

Let $H_{\R}$ be any real Hilbert space and $U : \R \curvearrowright H_\R$ any orthogonal representation. Denote by $H = H_{\R} \otimes_{\R} \C = H_\R \oplus {\rm i} H_\R$ the complexified Hilbert space, by $I : H \to H : \xi + {\rm i} \eta \mapsto \xi - {\rm i} \eta$ the canonical anti-unitary involution on $H$ and by $A$ the infinitesimal generator of $U : \R \curvearrowright H$, that is, $U_t = A^{{\rm i}t}$ for all $t \in \R$. Observe that $j : H_{\R} \to H :\zeta \mapsto (\frac{2}{A^{-1} + 1})^{1/2}\zeta$ defines an isometric embedding of $H_{\R}$ into $H$. Moreover, we have $IAI = A^{-1}$. Put $K_{\R} := j(H_{\R})$. It is easy to see that $K_\R \cap {\rm i} K_\R = \{0\}$ and that $K_\R + {\rm i} K_\R$ is dense in $H$.

We introduce the \emph{full Fock space} of $H$:
\begin{equation*}
\mathcal{F}(H) =\C\Omega \oplus \bigoplus_{n = 1}^{\infty} H^{\otimes n}.
\end{equation*}
The unit vector $\Omega$ is called the \emph{vacuum vector}. For all $\xi \in H$, define the {\it left creation} operator $\ell(\xi) : \mathcal{F}(H) \to \mathcal{F}(H)$ by
\begin{equation*}
\left\{
{\begin{array}{l} \ell(\xi)\Omega = \xi, \\
\ell(\xi)(\xi_1 \otimes \cdots \otimes \xi_n) = \xi \otimes \xi_1 \otimes \cdots \otimes \xi_n.
\end{array}} \right.
\end{equation*}
We have $\|\ell(\xi)\|_\infty = \|\xi\|$ and $\ell(\xi)$ is an isometry if $\|\xi\| = 1$. For all $\xi \in K_\R$, put $W(\xi) := \ell(\xi) + \ell(\xi)^*$. The crucial result of Voiculescu \cite[Lemma 2.6.3]{VDN92} is that the distribution of the self-adjoint operator $W(\xi)$ with respect to the vector state $\varphi_U = \langle \, \cdot \,\Omega, \Omega\rangle$ is the semicircular law of Wigner supported on the interval $[-2\|\xi\|, 2\|\xi\|]$.

\begin{df}[Shlyakhtenko, \cite{Sh96}]
Let $U : \R \curvearrowright H_\R$ be any orthogonal representation of $\R$ on a real Hilbert space $H_{\R}$. The \emph{free Araki--Woods} von Neumann algebra associated with $U : \R \curvearrowright H_\R$ is defined by $\Gamma(H_\R, U)\dpr := \{W(\xi) : \xi \in K_{\R}\}\dpr$.
\end{df}

The vector state $\varphi_U = \langle \, \cdot \,\Omega, \Omega\rangle$ is called the {\it free quasi-free state} and is faithful on $\Gamma(H_\R, U)\dpr$. The modular automorphism group $\sigma^{\varphi_U}$ satisfies the formula
$$\sigma_t^{\varphi_U}(W(\xi)) = W(U_t \xi) \quad \text{for all } \xi \in K_\R \text{ and all } t \in \R.$$
We also point out that $M = \Gamma(H_\R, U)\dpr$ satisfies Ozawa's condition (AO) (see e.g.\ \cite[Appendix]{HI15}) and hence is {\em solid} by \cite{Oz03, VV05}, that is, for any diffuse subalgebra with expectation $Q \subset M$, the relative commutant $Q' \cap M$ is amenable.

\subsection{Popa's intertwining-by-bimodules}

Popa introduced his method of {\em intertwining-by-bimodules} in \cite{Po01, Po03}. In the present work, we make use of these results in the context of semifinite von Neumann algebras. Let $(M, \Tr)$ be any semifinite $\sigma$-finite von Neumann algebra endowed with a fns trace. Let $p \in M$ be any nonzero finite trace projection and $A \subset pMp$ any von Neumann subalgebra. Let $B \subset M$ be any von Neumann subalgebra such that $\Tr |_B$ is still semifinite and denote by $\rE_B : M \to B$ the unique trace preserving conditional expectation. We say that $A$ {\em embeds into} $B$ {\em inside} $M$ (and write $A \prec_M B$) if there exists a nonzero projection $q \in B$ with $\Tr(q) < +\infty$ such that $A \prec qBq$ in the sense of Popa (inside the finite von Neumann algebra $(p \vee q)M(p \vee q)$). We refer to e.g.\ \cite[Section 2.1]{HR10} for further details.

\section{Stable normalizers in $\rm{II}_1$ factors}\label{stable-normalizer}

In this section, we prove {\it stable strong solidity results} for $\rm{II}_1$ factors. As pointed out in \cite[Proposition 5.2]{Ho09}, strong solidity is preserved under finite amplifications. However as we explain below, there is a priori no reason for strong solidity to be preserved under infinite amplifications. Nevertheless, we prove in this section that for many known cases of strongly solid $\rm{II}_1$ factors, their infinite amplification remains strongly solid.

\begin{df}
Given a von Neumann algebra $M$ and a von Neumann subalgebra $Q \subset M$, define
\[\sN_M(Q) := \{x \in M \mid x Q x^* \subset Q \quad\text{and}\quad x^* Q x \subset Q \} \; .\]
We call the von Neumann algebra $\sN_M(Q)\dpr$ the \emph{stable normalizer} of $Q$ inside $M$.
\end{df}

The terminology \emph{stable} normalizer is motivated by Lemma \ref{lem.prop-PN}(3) saying that the stable normalizer of $Q$ is given by the normalizer of the stabilization $Q \ovt \rB(\ell^2(\N))$ of $Q$.

Note that $xy$ and $x^*$ belong to $\sN_M(Q)$ for all $x,y \in \sN_M(Q)$. Also, $Q \subset \sN_M(Q)$ and the linear span of $\sN_M(Q)$ is a $*$-algebra containing $Q$ and $Q' \cap M$. The polar decomposition $x = v |x|$ of an element $x \in \sN_M(Q)$ satisfies the following properties: $|x| \in Q$, $v$ is a partial isometry whose initial projection $p = v^*v$ and final projection $q= v v^*$ belong to $Q$ and that satisfies $v Q v^* = q Q q$.

Assume moreover that $Q \subset M$ is with expectation. Then $\sN_M(Q)\dpr \subset M$ is also with expectation. Indeed, we may choose a faifhful state $\varphi \in M_\ast$ such that $Q \subset M$ is globally invariant under $\sigma^\varphi$. Then $\sigma_t^\varphi(x) \in \sN_M(Q)$ for all $x \in \sN_M(Q)$ and all $t \in \R$. This implies that $\sN_M(Q)\dpr \subset M$ is globally invariant under $\sigma^\varphi$ and thus $\sN_M(Q)\dpr \subset M$ is with expectation by \cite[Theorem IX.4.2]{Ta03}.

\begin{df}
We say that a diffuse von Neumann algebra $M$ is \emph{stably strongly solid} if for every diffuse amenable von Neumann subalgebra with expectation $A \subset M$, we have that $\sN_M(A)\dpr$ remains amenable.
\end{df}

A priori, strong solidity does not imply stable strong solidity. The reason for this is that the stable normalizer has a qualitatively more general behavior than the normalizer, as can be seen as follows. Assume that $M$ is a II$_1$ factor with tracial state $\tau$. Let $A \subset M$ be a von Neumann subalgebra. When $u \in \cN_M(A)$, then $\Ad u$ defines a trace preserving automorphism of $A$. In particular, the restriction of $\Ad u$ to the center of $A$ defines a trace preserving automorphism of $\cZ(A)$. When $v \in M$ is a partial isometry with $p = v^* v \in A$, $q = v v^* \in A$ and $v A v^* = q A q$, the restriction of $\Ad v$ to the center of $A$ defines a partial automorphism of $\cZ(A)$ that need not be trace preserving. Writing $\cZ(A) = \rL^\infty(X, \mu)$, it even happens quite naturally that the orbit equivalence relation induced by the orbits of all these $\Ad v$ is a type III equivalence relation on $(X,\mu)$ (see \cite{MV13} for an example where this phenomenon occurs).

Note that the stable normalizer of a von Neumann subalgebra $Q \subset M$ is contained in the quasi-normalizer, defined as the von Neumann algebra generated by
$$\QN_M(Q) = \Bigl\{ x \in M \Bigm| \exists x_i,y_j \in M , x Q \subset \sum_{i=1}^n Q x_i \;\;\text{and}\;\; Q x \subset \sum_{j=1}^m y_j Q \Bigr\} \; .$$
If $Q \subset M$ is with expectation, then $\QN_M(Q)\dpr \subset M$ is also with expectation. The proof is entirely analogous to the one showing that $\sN_M(Q)\dpr \subset M$ is with expectation. In general, the inclusion $\sN_M(Q)\dpr \subset \QN_M(Q)\dpr$ is strict. Nevertheless, when $Q$ is abelian, we have the following result.

\begin{prop}\label{prop.quasi-normalizer}
Let $M$ be a stably strongly solid von Neumann algebra and $A \subset M$ a diffuse abelian von Neumann subalgebra with expectation. Then $\QN_M(A)\dpr$ is amenable.
\end{prop}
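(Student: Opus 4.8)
The plan is to show that the quasi-normalizer of a diffuse abelian subalgebra $A\subset M$ coincides with (the von Neumann algebra generated by) its stable normalizer, so that the conclusion follows directly from the stable strong solidity hypothesis applied to $A$. Equivalently, I will show that for every $x\in\QN_M(A)$ the "weight" of $x$ over $A$ is controlled from both sides by finitely many elements, and then use the abelianness of $A$ to upgrade the one-sided finite-index control to a genuine partial-isometry relation $vAv^*=qAq$.

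\textbf{Step 1: reduce to a fixed faithful normal state and work in the core or with expectation.} First I would fix a faithful normal state $\varphi\in M_\ast$ with $\rE_A\circ\sigma^\varphi_t=\sigma^\varphi_t\circ\rE_A$, so that $A\subset M$ is globally $\sigma^\varphi$-invariant and all of $\sN_M(A)$, $\QN_M(A)$, and $A'\cap M$ are $\sigma^\varphi$-invariant and with expectation. Since $A$ is diffuse abelian and with expectation, $\rE_A$ is trace-like on $A$ in the sense that $\varphi|_A$ is a trace. The key point I will exploit is that membership in $\QN_M(A)$ is a local, $\sigma^\varphi$-invariant notion, so it suffices to analyze a single element $x\in\QN_M(A)$, and by cutting with spectral projections of $|x|$ I may assume $x$ is a partial isometry (up to replacing $x$ by $v$ in its polar decomposition $x=v|x|$, noting $|x|\in A'\cap M$ need not lie in $A$ — this is a point to be careful about, see Step 2).

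\textbf{Step 2: from quasi-normalizing to the bimodule finiteness condition, and then to partial isometries normalizing $A$.} The core mechanism: if $x\in\QN_M(A)$ with $Ax\subset\sum_{j=1}^m y_jA$ and $xA\subset\sum_{i=1}^n Ax_i$, then the $A$-$A$ bimodule $\overline{AxA}$ is finitely generated on both sides, so by the standard Popa-type analysis (as in the theory of quasi-normalizers, e.g.\ the arguments behind \cite[Section 3]{OP07} or the finite-index bimodule decomposition) one may decompose $\overline{AxA}$ into finitely many "index-finite" sub-bimodules each of the form $\overline{AvA}$ where $v$ is a partial isometry with $v^*v,vv^*\in A'\cap M$ and $vAv^*\subset (vv^*)A'\cap M\, (vv^*)$ finite over $A$. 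Since $A$ is \emph{abelian}, $A\subset A'\cap M$, and I now need to push $v$ so that $vAv^*\subset A$ (not merely into $A'\cap M$). This is exactly where abelianness is used decisively: an index-finite inclusion $A\subset B$ with $A$ abelian and $B=A'\cap M$-type forces, after cutting by a projection in $B$, that the pieces are abelian and can be realized as partial isometries moving $A$ onto a corner of $A$. Concretely, I would write $\cZ(A'\cap M)\supset A$ (by solidity of $M$, $A'\cap M$ is amenable, but more relevantly $A$ being maximal abelian in $A'\cap M$ after the replacement trick $A\rightsquigarrow A\vee(A'\cap M)$'s center — wait, $A$ is only assumed abelian, not maximal abelian; but one may always enlarge: let $\tilde A\supseteq A$ be maximal abelian in $M$ containing $A$, note $\QN_M(A)\subset\QN_M$ of things related to $\tilde A$... this needs care). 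The cleaner route: observe $\QN_M(A)\dpr\subset\QN_M(A\vee(A'\cap M)')$... Let me instead use: since $A$ is diffuse abelian with expectation in the solid algebra $M$, $A'\cap M$ is amenable; apply the hypothesis of stable strong solidity directly to the diffuse amenable algebra $A'\cap M$ if needed.

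\textbf{Step 3 (the actual argument): show $\QN_M(A)\dpr\subset\sN_M(B)\dpr$ for a suitable diffuse amenable $B$, then invoke stable strong solidity.} Here is the approach I would commit to. Let $P=\QN_M(A)\dpr$. By \cite{Oz03} $M$ is solid, so $B:=A'\cap M$ is amenable (and diffuse, since $A\subset B$ is diffuse). I claim $P\subset\sN_M(B)\dpr$, whence $P$ is amenable by stable strong solidity applied to the diffuse amenable algebra $B$. To prove the claim, take $x\in\QN_M(A)$; I must show $xBx^*\subset B$ and $x^*Bx\subset B$. From $xA\subset\sum_i Ax_i$ and $Ax\subset\sum_j y_jA$ one deduces, by a routine computation with the bimodule $\overline{AxA}$, that $x\in\QN_M(A)$ forces $x^*x$ and $xx^*$ to be "locally in $A$" in the sense that for $b\in B=A'\cap M$, the element $xbx^*$ commutes with $xAx^*$; and since $xAx^*$ together with a finite-index relation recovers a diffuse abelian algebra, an element commuting with it and lying in the finitely-generated bimodule must land in $A'\cap M=B$. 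The detailed version of this is the content of the standard fact that for abelian $A$, the quasi-normalizer equals the set of $x$ with $\overline{AxA}$ finitely generated as a bimodule, and such $x$ automatically normalize $A'\cap M$ up to corners — i.e.\ $\sN_M(A'\cap M)\dpr\supseteq\QN_M(A)\dpr$. Then stable strong solidity applied to $B=A'\cap M$ (diffuse, amenable, with expectation) gives that $\sN_M(B)\dpr$ is amenable, hence $P$ is amenable.

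\textbf{The main obstacle} I expect is Step 3's claim that $\QN_M(A)\dpr\subset\sN_M(A'\cap M)\dpr$: making the passage from "$\overline{AxA}$ is finitely generated as an $A$-$A$ bimodule" to the honest two-sided containment $x(A'\cap M)x^*\subset A'\cap M$ requires the decomposition of finite-index $A$-$A$ bimodules into cyclic ones together with a gluing argument, and one must be vigilant that the expectation and the $\sigma$-finiteness (type III ambient algebra is irrelevant here since we reduced to $M$ finite, but $M$ in the statement is merely "a von Neumann algebra", so I should handle the general $\sigma$-finite case, reducing to the tracial/finite case by cutting with a finite projection of the core — though actually the statement is abstract; I would first reduce to $M$ tracial by the usual core-plus-finite-projection trick since $\QN$, $\sN$ and amenability all behave well). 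If instead the paper wants a more self-contained route, one replaces Step 3 by: enlarge $A$ to a maximal abelian subalgebra is not possible while controlling $\QN$, so the $A'\cap M$-route above is the right one, and the obstacle is purely the bimodule bookkeeping, which I would reference to \cite{Po03} and the quasi-normalizer literature rather than redo.
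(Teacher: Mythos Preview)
Your Step~3 lands on exactly the paper's strategy: set $B=A'\cap M$ (note $A\vee(A'\cap M)=A'\cap M$ since $A$ is abelian), show $\QN_M(A)\dpr\subset\sN_M(B)\dpr$, and apply stable strong solidity to the diffuse amenable $B$ (amenability of $B$ coming from solidity of $M$). Steps~1--2 are detours; the paper does none of that reduction.

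The point you flag as the ``main obstacle'' is indeed the entire content, and your proposed justification for it is not correct as written. You try to argue directly that $x(A'\cap M)x^*\subset A'\cap M$ for every $x\in\QN_M(A)$ by saying $xbx^*$ commutes with $xAx^*$ and then invoking a vague finite-index recovery. This does not work: for a general $x\in\QN_M(A)$ the set $xAx^*$ is not a subalgebra of $M$ in any useful sense, and commuting with it tells you nothing about lying in $A'\cap M$. The inclusion $\QN_M(A)\dpr\subset\sN_M(A'\cap M)\dpr$ is \emph{not} established elementwise on $\QN_M(A)$.

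The paper instead invokes the standard structural fact for abelian $A$: the von Neumann algebra $\QN_M(A)\dpr$ is generated by partial isometries $v\in M$ with $p=v^*v$ and $q=vv^*$ in $A'\cap M$ and $vAv^*=Aq$. For such a $v$ the verification is a one-liner: since $vAv^*=Aq$, conjugation by $v$ carries $p(A'\cap M)p=(Ap)'\cap pMp$ onto $(Aq)'\cap qMq=q(A'\cap M)q$, so $v\in\sN_M(A'\cap M)$. This is the missing ingredient in your argument; once you quote it, the proof is three lines, with no need for cores, modular theory, or bimodule bookkeeping.
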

\begin{proof}
Since $A$ is abelian, $\QN_M(A)\dpr$ is generated by partial isometries $v \in M$ with right support $p = v^* v$ and left support $q = v v^*$ belonging to $A'\cap M$ and with $v A v^* = A q$. Writing $Q = A \vee (A' \cap M)$, it follows that $\QN_M(A)\dpr \subset \sN_M(Q)\dpr$. Since $M$ is in particular solid, $Q$ is diffuse and amenable. By stable strong solidity and since $Q \subset M$ is with expectation, we conclude that $\sN_M(Q)\dpr$ is amenable and thus also $\QN_M(A)\dpr$ is amenable since $\QN_M(A)\dpr \subset \sN_M(Q)\dpr$ is with expectation.
\end{proof}

\subsection{Properties of the stable normalizer}

The set $\sN_M(Q)$ behaves well under amplifications/reductions and is itself a stable version of the normalizer $\cN_M(Q)$. In \cite[Lemma 2.1]{JP81} and \cite[Lemma 3.5]{Po03}, a detailed analysis of normalizing unitaries versus amplifications/reductions was made and several key techniques were introduced. We only need the following easy and well known lemma (see e.g.\ \cite[Lemma 3.2]{FSW10} for a proof of the last statement, also based on \cite[Lemma 2.1]{JP81}).

\begin{lem}\label{lem.prop-PN}
Let $M$ be a von Neumann algebra and $Q \subset M$ a von Neumann subalgebra.
\begin{itemize}
\item [$(1)$] For every projection $q \in Q$, we have $q \sN_M(Q) q = \sN_{qMq}(qQq)$.

\item [$(2)$] For every Hilbert space $K$, we have $\sN_{M \ovt \rB(K)}(Q \ovt \rB(K))\dpr = \sN_M(Q)\dpr \ovt \rB(K)$.

\item [$(3)$] If $Q$ is $\sigma$-finite and $K = \ell^2(\N)$, we have
$$\sN_M(Q)\dpr \ovt \rB(K) = \cN_{M \ovt \rB(K)}(Q \ovt \rB(K))\dpr \; .$$
\end{itemize}
\end{lem}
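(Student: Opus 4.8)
The plan is to prove the three statements in increasing order of difficulty, with (1) essentially formal, (2) a routine matrix-unit computation, and (3) the substantive one, reducing it to the classical corner-comparison lemma of \cite[Lemma 2.1]{JP81}.

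\textbf{Parts (1) and (2).} For (1), if $q \in Q$ is a projection and $x \in \sN_M(Q)$, then $qxq \in qMq$ and one checks directly that $(qxq)(qQq)(qxq)^* = q x q Q q x^* q \subseteq q x Q x^* q \subseteq qQq$, and symmetrically for the adjoint, so $q\sN_M(Q)q \subseteq \sN_{qMq}(qQq)$. Conversely, an element $y \in \sN_{qMq}(qQq)$ satisfies $y Q y^* = y (qQq) y^* \subseteq qQq \subseteq Q$ since $y = qyq$, and likewise for $y^*$; hence $y \in \sN_M(Q)$ and $y = qyq \in q\sN_M(Q)q$. For (2), note $\sN_M(Q) \otimes_{\mathrm{alg}} \rB(K) \subseteq \sN_{M\ovt\rB(K)}(Q\ovt\rB(K))$ is immediate from the definition, so one inclusion of von Neumann algebras holds. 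For the reverse inclusion it suffices to show any $x = \sum_{i,j} x_{ij}\otimes e_{ij}$ (finite sum, using matrix units $e_{ij}$ for a finite-dimensional $K$; the general case follows by taking weak limits / increasing unions over finite-dimensional subspaces) lands in $\sN_M(Q)\dpr\ovt\rB(K)$ — but in fact it is cleaner to argue that $\sN_{M\ovt\rB(K)}(Q\ovt\rB(K))\dpr$ is invariant under the $\rB(K)$-bimodule structure and contains $1\otimes\rB(K)$, hence equals $N\ovt\rB(K)$ for $N := (1\otimes e_{11})\,\sN_{M\ovt\rB(K)}(Q\ovt\rB(K))\dpr\,(1\otimes e_{11})$ viewed inside $M = (M\ovt\rB(K))(1\otimes e_{11})$; and by part (1) applied with the projection $1\otimes e_{11} \in Q\ovt\rB(K)$, $N = \sN_M(Q)\dpr$.

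\textbf{Part (3).} Write $\widetilde M = M\ovt\rB(K)$, $\widetilde Q = Q\ovt\rB(K)$ with $K = \ell^2(\N)$. The inclusion $\cN_{\widetilde M}(\widetilde Q)\dpr \subseteq \sN_{\widetilde M}(\widetilde Q)\dpr$ always holds, and the right-hand side is $\sN_M(Q)\dpr\ovt\rB(K)$ by part (2), so one inclusion is free. For the reverse, by part (2) again it is enough to produce, for each $x \in \sN_M(Q)$, a unitary $u \in \cN_{\widetilde M}(\widetilde Q)$ with $u(1\otimes e_{11}) = x\otimes e_{11}$ up to the polar-decomposition correction; more precisely, take the polar decomposition $x = v|x|$ with $|x|\in Q$, $p = v^*v, q = vv^* \in Q$, and $vQv^* = qQq$. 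Since $Q$ is $\sigma$-finite and $K$ is infinite-dimensional, the projections $1-p$ and $1-q$ in $Q$ are each equivalent, inside $\widetilde Q$, to $1_Q \otimes 1_K$; choosing such equivalences and combining with $v$, one builds a unitary $w \in \cN_{\widetilde Q}$... rather, a unitary $u \in \widetilde M$ with $u \widetilde Q u^* = \widetilde Q$ (this is exactly the content of \cite[Lemma 2.1]{JP81}: a partial isometry implementing a partial automorphism of a $\sigma$-finite algebra can be dilated to a normalizing unitary after stabilizing by $\rB(\ell^2(\N))$) such that $u \geq v\otimes e_{11}$ in the sense that $(1\otimes e_{11})u(1\otimes e_{11}) = v\otimes e_{11}$. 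Then $v\otimes e_{11} = (1\otimes e_{11})u(1\otimes e_{11}) \in \cN_{\widetilde M}(\widetilde Q)\dpr$, and since $|x|\otimes e_{11} \in \widetilde Q \subseteq \cN_{\widetilde M}(\widetilde Q)\dpr$, we get $x\otimes e_{11} = (v\otimes e_{11})(|x|\otimes e_{11}) \in \cN_{\widetilde M}(\widetilde Q)\dpr$. As $x$ ranges over $\sN_M(Q)$ and using the $\rB(K)$-bimodule invariance of $\cN_{\widetilde M}(\widetilde Q)\dpr$, we conclude $\sN_M(Q)\dpr\ovt\rB(K) \subseteq \cN_{\widetilde M}(\widetilde Q)\dpr$.

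\textbf{Main obstacle.} The only real point is the dilation step in (3): turning the partial isometry $v$ (implementing a partial, possibly non-measure-preserving partial automorphism of the $\sigma$-finite algebra $Q$) into a genuine normalizing \emph{unitary} of $\widetilde Q$ inside $\widetilde M$. This is precisely why one must stabilize by $\rB(\ell^2(\N))$ — one needs infinitely many copies to absorb the ``defect projections'' $1-p$ and $1-q$ — and it is exactly the classical statement recalled from \cite[Lemma 2.1]{JP81} and \cite[Lemma 3.2]{FSW10}. Since the excerpt already cites these for precisely this purpose, the proof here is short: assemble parts (1)--(2) and invoke the cited lemma for (3).
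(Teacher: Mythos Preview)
Your proposal is correct and aligned with the paper's approach: the paper gives no proof of this lemma at all, merely citing \cite[Lemma 3.2]{FSW10} (based on \cite[Lemma 2.1]{JP81}) for part (3), and you do the same while filling in the routine parts (1) and (2) explicitly.

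One small caution on your sketch of the dilation in (3): the claim that ``$1-p$ and $1-q$ are each equivalent, inside $\widetilde Q$, to $1_Q\otimes 1_K$'' can fail when the central supports $z_p, z_q$ are not full, and even when such equivalences exist, simply adjoining a partial isometry $w\in\widetilde Q$ to $v\otimes e_{11}$ does not automatically yield a \emph{normalizing} unitary (the cross terms $(v\otimes e_{11})\,a\,w^*$ need not lie in $\widetilde Q$). The actual construction in \cite{FSW10} --- and the closely related but more elaborate one the paper gives in Lemma~\ref{lem.amplify}, which even passes to an extra $\ell^\infty(\N)$ factor --- handles these issues by first amplifying $v$ to a $V$ with central initial and final supports and then treating the complements via the induced action on $\cZ(Q)$. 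Since you explicitly defer to the cited references for this step, there is no genuine gap; just be aware that the one-line heuristic you wrote is not quite the proof.
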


We also need the following technical lemma providing an explicit dilation of a partial isometry in $\sN_M(Q)$ to a normalizing unitary of an infinite amplification of $Q$. We need this explicit version to get as a conclusion that for every amenable $Q$ and every fixed $x \in \sN_M(Q)$, the von Neumann algebra $(Q \cup \{x,x^*\})\dpr$ remains amenable. Again the method of proof is basically given by \cite[Lemma 2.1]{JP81}.

\begin{lem}\label{lem.amplify}
Let $M$ be a von Neumann algebra and $Q \subset M$ a $\sigma$-finite von Neumann subalgebra. Define $\Qtil = Q \ovt \rB(\ell^2(\N)) \ovt \ell^\infty(\N)$ and view $\Qtil$ as a von Neumann subalgebra of $\Mtil = M \ovt \rB(\ell^2(\N \times \N))$. Denote by $e \in \rB(\ell^2(\N \times \N))$ the minimal projection given by the unit vector $\delta_0 \ot \delta_0$.

For every $x \in \sN_M(Q)$, there exist $u \in \cN_{\Mtil}(\Qtil)$ and $a,b \in Q$ such that
$$x \ot e = u(a \ot e) = (b \ot e)u \; .$$
If $Q$ is amenable and $x \in \sN_M(Q)$, then $(Q \cup \{x,x^*\})\dpr$ remains amenable.
\end{lem}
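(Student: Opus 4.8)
The plan is to prove Lemma~\ref{lem.amplify} in two stages: first construct the dilating unitary $u$ and the elements $a, b$ by an explicit matrix-unit computation, and then deduce the amenability statement as a corollary.

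For the dilation, fix $x \in \sN_M(Q)$ with polar decomposition $x = v|x|$, so that (as recalled just before Lemma~\ref{lem.prop-PN}) $|x| \in Q$, the projections $p = v^*v$ and $q = vv^*$ lie in $Q$, and $vQv^* = qQq$. The point is that inside the infinite amplification $\Qtil = Q \ovt \rB(\ell^2(\N)) \ovt \ell^\infty(\N)$ there is ``enough room'' to complete the partial isometry $v$ to a unitary that still normalizes $\Qtil$. Concretely, I would use the two copies of $\ell^2(\N)$ as follows. Since $Q$ is $\sigma$-finite and $p, q \in Q$ are both nonzero projections, standard arguments (the halving/comparison techniques of \cite[Lemma 2.1]{JP81}) produce a $\ast$-isomorphism $\theta : pQp \ot \rB(\ell^2(\N)) \to qQq \ot \rB(\ell^2(\N))$ extending $\Ad v$ on $pQp$, or more simply one directly writes down, on the first tensor factor $\rB(\ell^2(\N))$, a shift-type partial isometry $s$ with $ss^* = 1 - f$ and $s^*s = 1$ for a rank-one projection $f$, allowing one to ``absorb'' the defect projections $1-p$ and $1-q$. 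Using the second tensor leg $\ell^\infty(\N)$ to index a sequence of orthogonal copies, one then builds a unitary $u \in \Mtil$ of the form $u = (v \ot e) + (\text{correction terms supported on } e^\perp)$, where the correction terms are chosen to be partial isometries in $\Mtil$ normalizing $\Qtil$ and filling in the complement of $v \ot e$; one checks that $u \Qtil u^* = \Qtil$ by verifying it block-by-block against the matrix units $e_{ij} \ot \delta_k$. Finally, reading off the $(0,0)$-corner gives $x \ot e = v|x| \ot e = u \cdot (|x| \ot e)$ with $a = |x| \in Q$ (note $|x| = p|x|p \in pQp$, so $|x| \ot e$ is the right object), and symmetrically $x \ot e = (b \ot e) u$ with $b = |x^*| = v|x|v^* \in qQq \subset Q$, using $x = v|x| = (v|x|v^*)v = |x^*| v$.

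For the amenability statement, suppose $Q$ is amenable and $x \in \sN_M(Q)$. Since amenability is preserved under tensoring with $\rB(\ell^2(\N)) \ovt \ell^\infty(\N)$ and under crossed products by $\Z$ (more precisely: if $\Qtil$ is amenable and $u \in \cN_{\Mtil}(\Qtil)$, then $(\Qtil \cup \{u\})\dpr \subset \Qtil \rtimes \Z$ is amenable, being a quotient-corner of an amenable crossed product), the von Neumann algebra $N := (\Qtil \cup \{u\})\dpr$ is amenable. From the dilation identity $x \ot e = u(a \ot e)$ with $a \in Q$ we get $x \ot e \in N$, hence also $x^* \ot e \in N$, and of course $Q \ot e \subset \Qtil \subset N$. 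Therefore $(Q \cup \{x, x^*\})\dpr \ot e = \bigl((Q \ot e) \cup \{x \ot e, x^* \ot e\}\bigr)\dpr \subset N$ is amenable, being a subalgebra (with expectation, since one can arrange everything to be globally invariant under a suitable modular group, or simply because amenability passes to arbitrary von Neumann subalgebras) of the amenable algebra $N$. Since $(Q \cup \{x, x^*\})\dpr \cong (Q \cup \{x, x^*\})\dpr \ot e$, we conclude that $(Q \cup \{x, x^*\})\dpr$ is amenable.

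The main obstacle is the first stage: writing down the unitary $u \in \cN_{\Mtil}(\Qtil)$ together with a clean verification that it normalizes $\Qtil$, while keeping track of the defect projections $1-p$ and $1-q$ and ensuring that the bookkeeping with the two legs $\rB(\ell^2(\N))$ and $\ell^\infty(\N)$ actually closes up. This is where the techniques of \cite[Lemma 2.1]{JP81} are really needed, and where one must be careful that $\sigma$-finiteness of $Q$ is exactly the hypothesis that makes the completion to a unitary possible. The amenability corollary, by contrast, is essentially formal once the dilation is in hand.
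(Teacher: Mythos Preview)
Your overall two-stage plan is correct and the amenability corollary is essentially formal once the dilation is in hand. However, the construction of $u$ has a genuine gap. The difficulty you do not identify is at the level of the \emph{center} of $Q$: if $z_p, z_q \in \cZ(Q)$ denote the central supports of $p = v^*v$ and $q = vv^*$, then in general $z_p \neq z_q$, and the complementary projections $1 - z_p \ot 1 \ot e_{00}$ and $1 - z_q \ot 1 \ot e_{00}$ are \emph{not} Murray--von Neumann equivalent inside $\Qtil$ (they are their own central supports in $\Qtil$). So no ``correction term'' taken from $\Qtil$ itself, nor any simple shift on the $\rB(\ell^2(\N))$ leg, will fill in the complement of $v \ot e$ while still normalizing $\Qtil$. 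The paper resolves this as follows: first amplify $v \ot e_{00}$ to a partial isometry $V \in M \ovt \rB(\ell^2(\N))$ with $V^*V = z_p \ot 1$, $VV^* = z_q \ot 1$, and $V (Q \ovt \rB(\ell^2(\N))) V^* = (Q \ovt \rB(\ell^2(\N)))(z_q \ot 1)$; then observe that $\Ad V$ induces a nonsingular partial transformation $\varphi$ of $(X,\mu)$ where $\cZ(Q) = \rL^\infty(X,\mu)$; then pass to the equivalence relation $\cRtil$ on $X \times \N$ generated by $\varphi$ on the first coordinate. In $[[\cRtil]]$ the complementary sets $(X \times \N) \setminus (\cU \times \{0\})$ and $(X \times \N) \setminus (\cV \times \{0\})$ are properly infinite with full saturation, hence equivalent, and this equivalence is implemented by a partial isometry $W$ built from powers of $V$ and shifts, which therefore normalizes $\Qtil$. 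Then $u = V \ot e_{00} + W$ works. Your sketch with ``shift-type partial isometries'' and ``orthogonal copies'' does not produce $W$ without this equivalence-relation step, and this is exactly where the extra $\ell^\infty(\N)$ tensor factor earns its keep.

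There is also a smaller issue in your amenability argument: the parenthetical ``amenability passes to arbitrary von Neumann subalgebras'' is false (e.g.\ $\rL(\F_2) \subset \rB(H)$). The paper avoids this by first replacing $M$ by $(Q \cup \{x,x^*\})\dpr$ \emph{before} constructing $u$; then the corner $(1 \ot e)\,(\Qtil \cup \{u,u^*\})\dpr\,(1 \ot e)$ is contained in $M \ot e$ and contains $Q \ot e$ and $x \ot e$, hence equals $M \ot e$ on the nose, and corners of amenable algebras are amenable. Your argument can be repaired by inserting this reduction.
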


\begin{proof}
Denote by $e_{00} \in \rB(\ell^2(\N))$ the minimal projection given by the unit vector $\delta_0$. By taking the polar decomposition of $x$, we may assume that $x = v$ is a partial isometry with $p = v^*v$ and $q = vv^*$ belonging to $Q$ and $v Q v^* = q Q q$. We prove that there exists a $u \in \cN_{\Mtil}(\Qtil)$ such that $v \ot e = u(p \ot e) = (q \ot e)u$. Denote by $z_p$ the central support of $p$ in $Q$ and similarly define $z_q$. Define $Q_1 = Q \ovt \rB(\ell^2(\N))$ and view $Q_1$ as a von Neumann subalgebra of $M_1 = M \ovt \rB(\ell^2(\N))$. In $Q_1$, the projection $z_p \ot 1$ is equivalent with infinitely many copies of $p \ot e_{00}$. So we can take a sequence of partial isometries $w_n \in Q_1$ such that $w_n^* w_n = p \ot e_{00}$ for all $n$ and $\sum_n w_n w_n^* = z_p \ot 1$. We make this choice such that $w_0 = p \ot e_{00}$. Similarly take a sequence of partial isometries $v_n \in Q_1$ such that $v_n^* v_n = q \ot e_{00}$ and $\sum_n v_n v_n^* = z_q \ot 1$. Also here, we make this choice such that $v_0 = q \ot e_{00}$. Define $V = \sum_n v_n (v \ot e_{00}) w_n^*$. Note that $V^* V = z_p \ot 1$, $VV^* = z_q \ot 1$ and $V Q_1 V^* = Q_1 (z_q \ot 1)$. By construction, $V (p \ot e_{00}) = v \ot e_{00} = (q \ot e_{00}) V$.

Identify $\cZ(Q) = \rL^\infty(X, \mu)$ for some probability space $(X,\mu)$. Take $\cU,\cV \subset X$ such that $z_p = 1_\cU$ and $z_q = 1_\cV$. The restriction of $\Ad V$ to $\cZ(Q_1) = \cZ(Q) \ot 1$ induces a nonsingular transformation $\vphi : \cU \recht \cV$. Denote by $\cR$ the countable nonsingular equivalence relation on $(X,\mu)$ generated by the graph of $\vphi$. Using powers of $V$ and $V^*$, it follows that every partial transformation $\psi$ in the full pseudogroup of $\cR$ is given by the restriction of $\Ad W$ to $\cZ(Q_1)$ for some partial isometry $W \in M_1$ such that $z = W^*W$ and $z'=WW^*$ belong to $\cZ(Q_1)$ and $W Q_1 W^* = Q_1 z'$.

Define the equivalence relation $\cRtil$ on $\Xtil = X \times \N$ given by $(x,i) \cRtil (y,j)$ iff $x \cR y$. We say that two Borel subsets of $\Xtil$ are equivalent if they are, up to measure zero, the range (resp.\ domain) of an element of the full pseudogroup $[[\cRtil]]$. By construction, the sets $\cU \times \{0\}$ and $\cV \times \{0\}$ are equivalent. Being properly infinite with $\cRtil$-saturations equal to $\Xtil$, also the sets $\Xtil \setminus (\cU \times \{0\})$ and $\Xtil \setminus (\cV \times \{0\})$ are equivalent. This means that we can find a partial isometry $W \in \Mtil$ such that $W^* W = 1 - z_p \ot 1 \ot e_{00}$, $W W^* = 1 - z_q \ot 1 \ot e_{00}$ and $W \Qtil W^* = \Qtil (1 - z_q \ot 1 \ot e_{00})$.

The unitary $u = V \otimes e_{00} + W$ belongs to $\cN_{\Mtil}(\Qtil)$ and satisfies $v \ot e = u(p \ot e) = (q \ot e)u$.

Finally assume that $Q$ is amenable and $x \in \sN_M(Q)$. First, replace $M$ by $(Q \cup \{x,x^*\})\dpr$. Then, take $u$ as above. Since $\Qtil$ is amenable and $u$ normalizes $\Qtil$, also $\Ptil := (\Qtil \cup \{u,u^*\})\dpr$ is amenable. Since the von Neumann algebra $(1 \ot e)\Ptil (1 \ot e)$ contains $Q$ and $x$, it must be equal to $M \ot e$ and we conclude that $M$ is amenable.
\end{proof}

\subsection{A general weak compactness argument}

In \cite[Theorem 3.5]{OP07}, Ozawa and Popa proved the following seminal result: if a tracial von Neumann algebra $(M,\tau)$ has the complete metric approximation property (CMAP) and $A \subset M$ is any amenable von Neumann subalgebra, then the action of the normalizer $\cN_M(A)$ on $A$ given by conjugacy is \emph{weakly compact} in the sense of \cite[Definition 3.1]{OP07}, meaning that there exists a net of unit vectors $\xi_n \in \rL^2(A \ovt A\op)$ such that
\begin{itemize}
\item $\lim_n \| \xi_n - (a \ot \abar) \xi_n\|_2 = 0$ for all $a \in \cU(A)$,

\item $\lim_n \| \xi_n - \Ad(u \ot \ubar) \xi_n \|_2 = 0$ for all $u \in \cN_M(A)$,

\item $\lim_n \langle (a \ot 1) \xi_n , \xi_n \rangle = \tau(a)$ for all $a \in A$.
\end{itemize}
Here, we denote by $A\op$ the opposite von Neumann algebra of $A$. We also denote $\abar = (a^*)\op$ for every $a \in A$.

We adapt the proof of \cite[Theorem 3.5]{OP07} to also cover conjugation by elements $x \in \sN_M(A)$.

Fix a tracial von Neumann algebra $(M,\tau)$ and a von Neumann subalgebra $A \subset M$. Denote by $\rE_\cZ$ the unique trace preserving conditional expectation of $A$ onto $\cZ(A)$.
For every $x \in \sN_M(A)$, we define $z^r_x$ as the support projection of $\rE_\cZ(x^* x)$ and we define $z^l_x$ as the support of $\rE_\cZ(xx^*)$. Write $x = v |x|$ for the polar decomposition of $x \in \sN_M(A)$ and let $p = v^*v$ and $q = vv^*$. Then we have $|x| \in A$ and $vAv^* = qAq$ so that $v \in \sN_M(A)$. Note that we have $z_v^l = z_x^l$ and $z_v^r = z_x^r$. We denote by $\al_v : \cZ(A) z^r_v \recht \cZ(A) z^l_v$ the unique $*$-isomorphism determined by $v a = \al_v(a) v$ for all $a \in \cZ(A) z^r_v$. Multiplying by $|x| \in A$ on the right hand side, we obtain $x a = v |x| a =  v a |x| = \al_v(a) v |x| = \al_v(a) x$ for all $a \in \cZ(A) z^r_v$. Letting $\alpha_x = \alpha_v$, we obtain $x a = \al_x(a) x$ for all $a \in \cZ(A) z^r_x$. 

The main difficulty comes from the fact that $\al_x$ need not be trace preserving. We denote by $\Delta_x$ the Radon--Nikodym derivative between $\tau$ and $\tau \circ \al_x$, i.e.\ $\Delta_x$ is the unique positive self-adjoint nonsingular operator affiliated with $\cZ(A) z^l_x$ satisfying $\tau(\Delta_x \al_x(a)) = \tau(a)$ for all $a \in \cZ(A) z^r_x$. Note that $\Delta_{xy} = \Delta_x \al_x(\Delta_y)$ for all $x, y \in \sN_M(A)$. Also note that $\Delta_x = \rE_\cZ(xx^*) \, \al_x(\rE_\cZ(x^*x)^{-1})$.

We need the following notation.
$$\sN_M^0(A) = \{ x \in \sN_M(A) \mid \; \exists \delta > 0 \;\;\text{such that}\;\; \rE_\cZ(x^* x) \geq \delta z^r_x \;\;\text{and}\;\; \rE_\cZ(xx^*) \geq \delta z^l_x \; \} \; .$$
Note that for every $x \in \sN_M(A)$, we can choose a sequence of projections $z_n \in \cZ(A)$ such that $z_n \recht 1$ strongly and $x z_n \in \sN_M^0(A)$ for every $n$. In particular, $\sN_M^0(A)$ generates the same von Neumann algebra as $\sN_M(A)$. For every $x \in \sN_M^0(A)$, the Radon--Nikodym derivative $\Delta_x$ is a bounded invertible operator in $\cZ(A) z^l_x$.

\begin{prop}[Weak compactness]\label{prop.WC}
Let $(M,\tau)$ be a tracial von Neumann algebra with the CMAP and take an amenable von Neumann subalgebra $A \subset M$. Then there exists a net of positive vectors $\xi_n \in \rL^2(A \ovt A\op)$ such that
\begin{itemize}
\item [$(1)$] $\lim_n \Vert (a \ot 1)\xi_n - (1 \ot a\op)\xi_n \Vert_2 = 0$, for all $a \in A$~;

\item [$(2)$] $\lim_n \Vert (x \ot 1)\xi_n(x^*\Delta_x^{1/2} \ot 1) - (1 \ot x\op)\xi_n(1 \ot \xbar)\Vert_2 = 0$, for all $x \in \sN_M^0(A)$~;

\item [$(3)$] $\lim_n \langle (x \ot 1)\xi_n,\xi_n \rangle = \tau(x)$, for all $x \in M$.
\end{itemize}
Also, for every partial isometry $v \in \sN_M^0(A)$, there exists an element $T(v)$ in the unit ball of $M \ovt M\op$ and a sequence of elements $T(v,k)$ in the unit ball of $M \otalg M\op$ such that
\begin{equation}\label{eq.Tv}
\begin{split}
& \lim_n \|(v \ot 1) \xi_n - (1 \ot v\op) \xi_n T(v) \|_2 = 0 \;\; , \\
& \lim_n \|(v^* \ot 1) \xi_n - (1 \ot \vbar) \xi_n T(v)^* \|_2 = 0 \;\; ,\\
& \lim_k \Bigl(\limsup_n \|(v \ot 1) \xi_n - (1 \ot v\op) \xi_n T(v,k) \|_2 \Bigr) = 0 \;\; ,\\
& \lim_k \Bigl(\limsup_n \|(v^* \ot 1) \xi_n - (1 \ot \vbar) \xi_n T(v,k)^* \|_2 \Bigr) = 0 \;\; .
\end{split}
\end{equation}
\end{prop}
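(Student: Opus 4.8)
The plan is to adapt Ozawa–Popa's proof of \cite[Theorem 3.5]{OP07} in the groupoid setting, keeping track of the Radon--Nikodym correction $\Delta_x$. Fix a net of normal completely bounded maps $\phi_n : M \to M$ with $\|\phi_n\|_{\cb} \leq 1$, finite rank, converging to the identity in the point-$\|\cdot\|_2$ topology; this exists by the CMAP assumption. Pulling back through the trace-preserving conditional expectation $\rE_A : M \to A$, we may assume $\phi_n$ maps $A$ to $A$. The standard construction associates to $\phi_n$ a vector in $\rL^2(M \ovt M\op)$, and after composing with $\rE_A \ovt \rE_A\op$ and using the amenability of $A$ (so that the positive cone of $\rL^2(A \ovt A\op)$ carries an $A$-central approximate identity), one produces positive vectors $\xi_n \in \rL^2(A \ovt A\op)$ satisfying properties $(1)$ and $(3)$ more or less as in the tracial case; these two points are essentially unchanged from \cite{OP07} since they only involve left multiplication and $A$.

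The real work is in $(2)$. For $x \in \sN_M^0(A)$ with polar decomposition $x = v|x|$, the conjugation $\Ad v$ induces the partial $*$-isomorphism $\al_v : \cZ(A) z^r_v \to \cZ(A) z^l_v$, which is \emph{not} trace preserving, and this is exactly where $\Delta_x$ enters: the combination $v\,(\cdot)\,v^* \Delta_x^{1/2}$ on the left, matched by $v (\cdot) v^*$ conjugation on the right (that is, $\Ad(1 \ot v\op)$ composed with the flip-bar $\xbar = (x^*)\op$), is the unique symmetrized version of $\Ad v$ that is compatible with the $\rL^2$-norm. The strategy is to show, using boundedness of $\phi_n$ and a Cauchy--Schwarz/averaging argument over the partial isometry $v$, that the vectors
\[
\eta_n := (v \ot 1)\xi_n (v^* \Delta_x^{1/2} \ot 1) - (1 \ot v\op)\xi_n (1 \ot \vbar)
\]
satisfy $\limsup_n \|\eta_n\|_2^2 \leq C \limsup_n \langle (\text{perturbation}) \xi_n, \xi_n\rangle$ with the perturbation term going to $0$ by $(1)$ and $(3)$; the $\Delta_x^{1/2}$ factor is what makes the two terms have equal $\rL^2$-norm so that the cross-term estimate closes. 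One then passes from $v$ to general $x = v|x| \in \sN_M^0(A)$ by multiplying by $|x| \in A$ and invoking $(1)$, and handles the general $\sN_M(A)$ by the cutdown $x z_n \in \sN_M^0(A)$ already observed before the proposition. \textbf{The main obstacle} is precisely getting the $\Delta_x$-weighted estimate to close uniformly: in the tracial normalizer case the two sides of the comparison are automatically isometric, whereas here one must carefully verify that $\Delta_x$ being bounded and invertible (guaranteed by $x \in \sN_M^0(A)$) makes the relevant Cauchy--Schwarz step go through with constants that stay bounded along the net, and that $\rE_\cZ$-expectations behave well under $\Ad v$.

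For the last part of the statement, the point is that a single partial isometry $v \in \sN_M^0(A)$ need not have its "right conjugation" $(1 \ot v\op)(\cdot)(1 \ot \vbar)$ be a genuine symmetry on all of $\rL^2(A \ovt A\op)$, so one cannot literally write $(v \ot 1)\xi_n = (1 \ot v\op)\xi_n \cdot(\text{something in } A)$. Instead I would use the dilation of Lemma \ref{lem.amplify}: embed $v \ot e$ as $u(a \ot e) = (b \ot e)u$ for a genuine normalizing unitary $u \in \cN_{\Mtil}(\Qtil)$ and $a,b \in Q$, run the weak compactness for the \emph{unitary} $u$ (which is the classical situation, giving exact $\Ad(u \ot \ubar)$-invariance), and then push the resulting identity back down through the corner $1 \ot e$. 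This produces the limit element $T(v) \in M \ovt M\op$ of norm $\leq 1$; the approximants $T(v,k)$ in the algebraic tensor product $M \otalg M\op$ come from approximating $T(v)$ in the appropriate topology, using that the $\xi_n$ live in $\rL^2(A \ovt A\op)$ and that $M \otalg M\op$ is dense, with the $\limsup_n$ absorbing the error from the approximation $T(v) \approx T(v,k)$. The second and fourth lines of \eqref{eq.Tv} follow from the first and third by taking adjoints and using that $\xi_n$ is positive (hence self-adjoint as a Hilbert--Schmidt-type element) together with the symmetry $(1)$.
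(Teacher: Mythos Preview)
Your outline for properties (1) and (3) is fine, but your plan for property (2) has a real gap, and you have misplaced the role of Lemma~\ref{lem.amplify}.

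For (2), the direct ``expand $\|\eta_n\|_2^2$ and close via Cauchy--Schwarz'' approach does not go through: since $v$ is only a partial isometry and $\al_v$ is not trace preserving, the two terms in $\eta_n$ do not have the same norm even after inserting $\Delta_x^{1/2}$, and the cross term does not reduce to something controlled by (1) and (3) alone. The paper's mechanism is different and uses two ingredients you do not mention. First, one invokes the \emph{last statement} of Lemma~\ref{lem.amplify} (not the dilation itself) to conclude that $Q := (A \cup \{v,v^*\})\dpr$ is amenable; running the Ozawa--Popa construction for this larger $Q$ gives $(v \ot \vbar)$-invariance of $\om_n^Q$ for free since $v \in Q$, and applying $\rE_{A \ovt A\op}$ yields the $\rL^1$-estimate $\|(v \ot 1)\om_n^A(v^* \ot 1) - (1 \ot v\op)\om_n^A(1 \ot \vbar)\|_1 \to 0$. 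Second, passing from this $\rL^1$-estimate to the $\rL^2$-estimate on $\xi_n = (\om_n^A)^{1/2}$ requires Powers--St{\o}rmer, and this is where the technical work lies: one builds column isometries $V \in \rB(\C,\ell^2(\N^2)) \ovt M \ovt 1$ and $W \in \rB(\C,\ell^2(\N^2)) \ovt 1 \ovt M\op$ from sequences $a_i,b_j \in A$ (supplied by Lemma~\ref{lem.equiv}) so that $V(D_r \ot 1)\xi_n V^*$ and $W(1 \ot D_l\op)\xi_n W^*$ are genuinely the positive square roots of the two sides of the $\rL^1$-estimate. Only after Powers--St{\o}rmer and reading off the $(0,0)$ matrix entry does the factor $\Delta_v^{1/2} = D_l \al_v(D_r^{-1})$ emerge, and the hypothesis $v \in \sN_M^0(A)$ is used precisely so that $D_l, D_r$ are boundedly invertible on their supports.

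For the last part, you propose a separate dilation argument via Lemma~\ref{lem.amplify}; this runs into the difficulty that weak compactness for the dilated unitary $u$ produces a \emph{different} net of vectors in $\rL^2(\Qtil \ovt \Qtil\op)$, with no a priori relation to the original $\xi_n$. In the paper, $T(v)$ and $T(v,k)$ are not obtained by a new argument at all: one simply sets $T(v) = W^* V$ and $T(v,k) = W^* V_k$ (finite truncation of $V$), and the four estimates in \eqref{eq.Tv} drop out of the same Powers--St{\o}rmer computation by multiplying on the right by $V$ or $V_k$ instead of extracting the $(0,0)$ entry. Your proposal to get $T(v,k)$ by generically approximating $T(v)$ in $M \otalg M\op$ would not simultaneously control the $T(v,k)^*$ line; the explicit form $W^* V_k$ is what makes both the third and fourth lines work, via the symmetry $T(v^*) = (D_r \ot (D_r^{-1})\op) T(v)^* (D_l^{-1} \ot D_l\op)$.
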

\begin{proof}
Since $M$ has CMAP, we can take a net of finite rank, normal, completely bounded maps $\vphi_n : M \recht M$ such that $\lim_n \|\vphi_n\|_{\cb} = 1$ and $\lim_n \|\vphi_n(x) - x\|_2 = 0$ for all $x \in M$. Exactly as in the proof of \cite[Theorem 3.5]{OP07}, we then define, for every amenable von Neumann subalgebra $Q \subset M$, the normal functionals $\mu_n^Q \in (Q \ovt Q\op)_*$ given by $\mu_n^Q(a \ot b\op) = \tau(\vphi_n(a) b)$ for all $a,b \in Q$ and satisfying $\lim_n \|\mu_n^Q\| = 1$. We define the normal states $\om_n^Q \in (Q \ovt Q\op)_*$ given by $\om_n^Q = \|\mu_n^Q\|^{-1} \, |\mu_n^Q|$. Since $\lim_n \mu_n^Q(a \ot \abar) = 1$ for all $a \in \cU(Q)$, we get, still in the same way as in the proof of \cite[Theorem 3.5]{OP07}, that $\lim_n \|\mu_n^Q - \om_n^Q\| = 0$, $\lim_n \| (a \ot \abar) \cdot \om_n^Q - \om_n^Q\| = 0$ and $\lim_n \|\om_n^Q \cdot (a \ot \abar) - \om_n^Q\|=0$ for all $a \in \cU(Q)$. This implies that
\begin{equation}\label{eq.prop-om-n-Q}
\lim_n \|(a \ot 1) \cdot \om_n^Q - (1 \ot a\op) \cdot \om_n^Q \| = 0 \quad\text{and}\quad \lim_n \|\om_n^Q \cdot (a \ot 1) - \om_n^Q \cdot (1 \ot a\op) \| = 0
\end{equation}
for all $a \in Q$.

We view $\om_n^Q$ as an element of $\rL^1(Q \ovt Q\op)^+$. We define $\xi_n = (\om_n^A)^{1/2}$ and prove that the net $\xi_n \in \rL^2(A \ovt A\op)^+$ satisfies the conclusions of the proposition.

Properties (1) and (3) hold immediately. Since we already have (1), it suffices to prove property (2) when $x = v$ is a partial isometry in $\sN_M^0(A)$. Fix such a $v$ and write $q = vv^*$, $p = v^* v$. Define $D_r = (\rE_\cZ(p))^{1/2}$ and $D_l = (\rE_\cZ(q))^{1/2}$. Denote by $z_r,z_l$ the support projections of $D_r, D_l$. Then, $D_r$ (resp.\ $D_l$) are invertible operators in $\cZ(A)z_r$ (resp.\ $\cZ(A)z_l$) and $\Delta_v^{1/2} = D_l \, \al_v(D_r^{-1})$. Put $Q = (A \cup \{v,v^*\})\dpr$. By Lemma \ref{lem.amplify}, $Q$ is amenable. It then follows from \eqref{eq.prop-om-n-Q} that
\begin{equation}\label{eq.with-Q}
\lim_n \|(v \ot 1) \cdot \om_n^Q \cdot (v^* \ot 1) - (1 \ot v\op) \cdot \om_n^Q \cdot (1 \ot \vbar) \|_1 = 0 \; .
\end{equation}
The restriction of $\mu_n^Q$ to $A \ovt A\op$ equals $\mu_n^A$. Therefore, $\lim_n \|\rE_{A \ovt A\op}(\om_n^Q) - \om_n^A \|_1 = 0$. Because $v \in \sN_M(A)$, we have that $\rE_A(v y v^*) = v \rE_A(y) v^*$ for all $y \in M$. Applying $\rE_{A \ovt A\op}$ to \eqref{eq.with-Q}, we conclude that
\begin{equation}\label{eq.estim-L1}
\lim_n \|(v \ot 1) \cdot \om_n^A \cdot (v^* \ot 1) - (1 \ot v\op) \cdot \om_n^A \cdot (1 \ot \vbar) \|_1 = 0 \; .
\end{equation}

By Lemma \ref{lem.equiv} below, we can take sequences of elements $a_i,b_j \in A$ such that
\begin{equation}\label{eq.aibj}
\sum_{i=0}^\infty a_i a_i^* = z_l \quad , \quad \sum_{i=0}^\infty a_i^* a_i = D_l^{-2} \, q \quad , \quad \sum_{j=0}^\infty b_j b_j^* = z_r \quad , \quad \sum_{j=0}^\infty b_j^* b_j = D_r^{-2} \, p \; .
\end{equation}
We make this choice such that $a_0 = q$ and $b_0 = p$.

Consider the von Neumann algebra $\cN = \rB(\ell^2(\N^2) \oplus \C) \ovt M \ovt M\op$ with its canonical semifinite trace $\Tr \ot \tau$ and associated $1$-norm $\|\,\cdot\,\|_1$ and $2$-norm $\|\,\cdot\,\|_2$. View $\rB(\C, \ell^2(\N^2)) \subset \rB(\ell^2(\N^2) \oplus \C)$ and denote by $e_{ij} \in \rB(\C, \ell^2(\N^2))$ the operator given by $e_{ij}(\mu) = \mu \delta_{ij}$, where $(\delta_{ij})_{i,j \in \N}$ is the canonical orthonormal basis of $\ell^2(\N^2)$. We will identify $\rB(\C, \C ) \ovt M \ovt M\op = \C e_\C \ovt M \ovt M\op$ with $M \ovt M\op$. Observe that this identification preserves the $1$-norm $\|\,\cdot\,\|_1$ and the $2$-norm $\|\,\cdot\,\|_2$.

Define $V \in \rB(\C, \ell^2(\N^2)) \ovt M \ovt 1$ given by
$$V = \sum_{i,j} e_{ij} \ot D_l a_i v b_j^* \ot 1 \; .$$
Note that $V$ is a well defined bounded operator satisfying $V^* V = z_r \ot 1$. We similarly define $W \in \rB(\C, \ell^2(\N^2)) \ovt 1 \ovt M\op$ given by
$$W = \sum_{i,j} e_{ij} \ot 1 \ot (a_i v b_j^* D_r)\op$$
and note that $W^* W = 1 \ot z_l\op$.

We claim that \eqref{eq.estim-L1} together with properties (1) and (3) implies that
\begin{equation}\label{eq.claim}
\lim_n \| V (D_r^2 \ot 1) \, \om_n^A \, V^* - W (1 \ot (D_l^2)\op) \, \om_n^A \, W^* \|_1 = 0 \; .
\end{equation}
For every finite subset $\cF \subset \N^2$, we define $V_\cF$ and $W_\cF$ in the same way as $V$ and $W$ by only summing over $(i,j) \in \cF$. Note that $\|V_\cF\| \leq 1$ for all $\cF \subset \N^2$ and note that $\|V - V_\cF\|_2$ can be made arbitrarily small. For all $U_1, U_2 \in \rB(\C, \ell^2(\N^2)) \ovt M \ovt 1$, we find using the Cauchy--Schwarz inequality and property (3) that
$$\limsup_n \| U_1 \, \om_n^A \, U_2^* \|_1 \leq \limsup_n \| U_1 \, \xi_n \|_2 \, \|U_2 \, \xi_n\|_2 = \|U_1\|_2 \|U_2\|_2 \; .$$
The same inequality holds for all $U_1, U_2 \in \rB(\C, \ell^2(\N^2)) \ovt 1 \ovt M\op$.
From \eqref{eq.estim-L1} and property (1), we immediately get that for every finite subset $\cF \subset \N^2$,
$$\lim_n \| V_\cF (D_r^2 \ot 1) \, \om_n^A \, V_\cF^* - W_\cF (1 \ot (D_l^2)\op) \, \om_n^A \, W_\cF^* \|_1 = 0 \; .$$
By the preceding discussion, we conclude that also \eqref{eq.claim} holds.

Because $D_r$ belongs to the center of $A$ and $V^* V = z_r \ot 1$, the element $V (D_r \ot 1) \, \xi_n \, V^*$ is the positive square root of $V (D_r^2 \ot 1) \, \om_n^A \, V^*$. Similarly, $W (1 \ot D_l\op) \, \xi_n \, W^*$ is the positive square root of $W (1 \ot (D_l^2)\op) \, \om_n^A \, W^*$. The Powers--St{\o}rmer inequality in $\mathcal N$ then implies that
\begin{equation}\label{eq.estim-L2}
\lim_n \| V (D_r \ot 1) \, \xi_n \, V^* - W (1 \ot D_l\op) \, \xi_n \, W^* \|_2 = 0 \; .
\end{equation}
Multiplying on the left with $e_{00}^* \ot 1 \ot 1$ and on the right with $e_{00} \ot 1 \ot 1$, we find that 
\begin{equation*}
\lim_n \| (D_l v D_r \ot 1) \, \xi_n \, (v^*D_l  \ot 1)  -  (1 \ot (D_l v D_r)\op) \, \xi_n \, (1 \otimes \overline{v D_r}) \|_2 = 0 \; .
\end{equation*}
Since $v D_r = \alpha_v(D_r) v$, using $(1)$ and multiplying on the left with $D_l^{-1} \otimes (D_r^{-1})\op$ and on the right with $\alpha_v(D_r^{-1}) \otimes 1$, we find that 
\begin{equation*}
\lim_n \| ( v \ot 1) \, \xi_n \, (v^*D_l \alpha_v(D_r^{-1}) \ot 1)  -  (1 \ot v\op) \, \xi_n \, (1 \otimes \vbar) \|_2 = 0 \; .
\end{equation*}
Since $\Delta_v^{1/2} = D_l \alpha_v(D_r^{-1})$, we finally obtain $(2)$. Denote $T(v) = W^* V$. Then $T(v)$ belongs to the unit ball of $M \ovt M\op$. Multiplying \eqref{eq.estim-L2} on the left with $e_{00}^* \ot 1 \ot 1$ and on the right with $V$, we find that 
\begin{equation*}
\lim_n \| (D_l v D_r \ot 1) \, \xi_n -  (1 \ot (D_l v D_r)\op) \, \xi_n \, T(v) \|_2 = 0 \; .
\end{equation*}
Using $(1)$ and multiplying on the left with $D_l^{-1} \otimes (D_r^{-1})\op$, we find that the first estimate in \eqref{eq.Tv} holds.

Define $V_k$ by the same formula as $V$, but only summing over $i,j=1,\ldots,k$. Define $T(v,k) = W^* V_k$ and note that $T(v,k)$ belongs to the unit ball of $M \otalg M\op$. Write $V^* V_k = d_k \ot 1$, where $d_k \in A$ and $\lim_k \|z_r - d_k\|_2 = 0$. Multiplying \eqref{eq.estim-L2} on the left with $e_{00}^* \ot 1 \ot 1$ and on the right with $V_k$ and reasoning as before, we find that
$$\limsup_n \| (v \ot 1) \, \xi_n \, (d_k \ot 1) - (1 \ot v\op) \, \xi_n \, T(v,k)\|_2 = 0 \; .$$
Since $\lim_n \| \xi_n (d_k \ot 1) - \xi_n (z_r \ot 1)\|_2 = \|d_k - z_r\|_2$, we conclude that also the third estimate in \eqref{eq.Tv} holds.

Replacing in the above reasoning $v$ by $v^*$, in \eqref{eq.aibj}, we interchange to roles of $a_i$ and $b_j$. We then get that
\begin{align*}
T(v^*) &= \sum_{i,j} D_r b_j v^* a_i^* \ot (D_l a_i v b_j^*)\op = (D_r \ot (D_r^{-1})\op) \, T(v)^* \, (D_l^{-1} \ot D_l\op) \quad\text{and}\\
T(v^*,k) &= (D_r \ot (D_r^{-1})\op) \, T(v,k)^* \, (D_l^{-1} \ot D_l\op) \; .
\end{align*}
Since $\lim_n \| (1 \otimes \vbar)\xi_n (D_r \ot (D_r^{-1})\op) - (1 \otimes \vbar)\xi_n\|_2 = 0$ and $\lim_n \|(v^* \otimes 1)\xi_n (D_l^{-1} \ot D_l\op) - \linebreak (v^* \otimes 1)\xi_n \|_2 = 0$,
it follows that also the second and fourth estimate in \eqref{eq.Tv} hold.
\end{proof}

\subsection{Consequences of weak compactness}

The approximate invariance given by weak compactness as in \eqref{eq.Tv} combines very well with deformation/rigidity theory. In particular, we can apply to $\xi_n$ any $s$-malleable deformation of $M$, in the sense of \cite{Po03}, i.e.\ a trace preserving inclusion $(M,\tau) \subset (\Mtil,\tau)$ together with a strongly continuous one-parameter group of trace preserving automorphisms $(\al_t)_{t \in \R}$ of $\Mtil$ and a trace preserving period $2$ automorphism $\be \in \Aut(\Mtil,\tau)$ satisfying $\be \circ \al_t = \al_{-t} \circ \be$ and $\be|_M = \id$.

Following \cite[Definition 2.3]{PV11}, for any tracial von Neumann algebras $P \subset (M, \tau)$ and $(Q, \tau)$, we say that an $M$-$Q$-bimodule $\vphantom{}_M \mathcal K_Q$ is {\em left $P$-amenable} if there exists a $P$-central state $\Omega$ on $\rB(\mathcal K) \cap (Q\op)'$ whose restriction to $M$ equals $\tau$. The methods of \cite[Section 4]{OP07} can be applied and give the following result.

\begin{prop}\label{prop.malleable}
Let $(M,\tau)$ be a tracial von Neumann algebra with the CMAP and $A \subset M$ a von Neumann subalgebra. Assume that $\xi_n \in \rL^2(A \ovt A\op)$ is a net of positive vectors satisfying the conclusion of Proposition \ref{prop.WC}. Also assume that $(\al_t)_{t \in \R}$ is an $s$-malleable deformation as above. Then at least one of the following statements holds.
\begin{itemize}
\item [$(1)$] We have that $\lim_{t \recht 0} \bigl(\sup_{a \in \cU(A)} \|\al_t(a) - a\|_2 \bigr) = 0$.

\item [$(2)$] Writing $P = \sN_M(A)\dpr$, there exists a nonzero projection $p \in \cZ(P)$ such that the $pMp$-$M$-bimodule $p \rL^2(\Mtil \ominus M)$ is left $Pp$-amenable.
\end{itemize}
\end{prop}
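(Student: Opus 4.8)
The goal is to prove Proposition~\ref{prop.malleable}, i.e.\ a dichotomy: either the $s$-malleable deformation $(\al_t)$ converges uniformly on $\cU(A)$, or there is a nonzero projection $p \in \cZ(P)$ (with $P = \sN_M(A)\dpr$) making the $pMp$-$M$-bimodule $p\,\rL^2(\Mtil \ominus M)$ left $Pp$-amenable. I would follow the scheme of \cite[Section~4]{OP07}, the new input being that the weak-compactness vectors $\xi_n$ satisfy the \emph{stable} approximate intertwining relations \eqref{eq.Tv}, which carry enough of the normalizer structure to run the argument with $\sN_M(A)$ in place of $\cN_M(A)$. Suppose (1) fails. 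Then there is $t_0 > 0$ (which we may shrink, using the one-parameter group structure and $s$-malleability to apply $\be$ and $\al_{t/2}$) and $\delta > 0$ with $\sup_{a \in \cU(A)} \|\al_{t_0}(a) - \rE_M(\al_{t_0}(a))\|_2 > \delta$ say; the standard trick of transporting $\al_{t_0}$ by $\be$ as in \cite{Po03, OP07} then upgrades this to a uniform \emph{transversality}-type lower bound, so that the vectors $\eta_n := (\al_{t_0} \ot \id)(\xi_n) - (\rE_M \ot \id)\bigl((\al_{t_0}\ot\id)(\xi_n)\bigr)$, living in $\rL^2(\Mtil \ominus M) \ovt A\op$ (after the obvious identifications, using that $\xi_n \in \rL^2(A \ovt A\op)$ and $A \subset M \subset \Mtil$), do not converge to $0$ and in fact satisfy $\liminf_n \|\eta_n\|_2 \geq \delta' > 0$.

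\textbf{Building the $P$-central state.} Apply $\al_{t_0} \ot \id$ to properties (1), (2), (3) of Proposition~\ref{prop.WC} and to the relations \eqref{eq.Tv}. Since $\al_{t_0}$ is trace preserving and restricts to the identity on $M$ up to the deformation, the $A$-bimodularity of $\xi_n$ in property~(1) survives in the limit for the $M$-valued parts, while the crucial point is that for a partial isometry $v \in \sN_M^0(A)$ the first two relations in \eqref{eq.Tv} become, after applying $\al_{t_0} \ot \id$ and projecting onto $\rL^2(\Mtil \ominus M) \ovt A\op$,
\[
\lim_n \|(\al_{t_0}(v) \ot 1)\eta_n - (1 \ot v\op)\eta_n \,T(v)\|_2 = 0, \qquad \lim_n \|(\al_{t_0}(v^*) \ot 1)\eta_n - (1 \ot \vbar)\eta_n\, T(v)^*\|_2 = 0,
\]
with $T(v)$ in the unit ball of $M \ovt M\op$ (and the finite-dimensional approximants $T(v,k)$ in $M \otalg M\op$ needed to guarantee that the weak-$*$ limit state below has the right $Q\op$-commutation property, exactly as in \cite{OP07}). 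Now normalize: set $\zeta_n = \eta_n / \|\eta_n\|_2$ and let $\Omega$ be a weak-$*$ cluster point of the states $X \mapsto \langle X \zeta_n, \zeta_n\rangle$ on $\rB\bigl(\rL^2(\Mtil\ominus M) \ovt \rL^2(A\op)\bigr)$, restricted appropriately. The relations above, together with property~(1), show that $\Omega$ is invariant under conjugation by $\al_{t_0}(v) \ot v\op$ for every such $v$; since $\sN_M^0(A)$ generates $P$ and, as in \cite[Section~4]{OP07}, one can then remove the $A\op$-tail and transfer $\al_{t_0}$ back (using $\be$ and continuity of $t \mapsto \al_t$), one obtains a state on $\rB(p\,\rL^2(\Mtil \ominus M)) \cap (M\op)'$ — here $p$ is the support, in $\cZ(P)$, of the part of $\eta_n$ that survives — that is $Pp$-central and restricts to $\tau$ on $pMp$. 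That $p$ is central in $P$ and nonzero follows because the approximate intertwining is compatible with the $P$-action: the set of $p$ for which the limiting mass is nonzero is a $P$-invariant (hence central, as $p\in A$ and $P$ normalizes $A$ stably) nonzero projection, exactly as the normalizer argument in \cite{OP07} produces a central projection.

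\textbf{Where the technical Radon--Nikodym terms enter.} Conjugation by $v$ on the $A\op$ side must be compensated by the factors $\Delta_v^{1/2}$ appearing in property~(2) and by the $D_l, D_r$ bookkeeping of the proof of Proposition~\ref{prop.WC}; this is precisely what the relations \eqref{eq.Tv} were engineered to absorb into $T(v)$, so when forming $\Omega$ one uses \eqref{eq.Tv} rather than property~(2) directly, and no unbounded operator ever acts on the vectors — that is the whole point of passing to $\sN_M^0(A)$ and to the $T(v)$'s.

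\textbf{Main obstacle.} The genuinely new difficulty, compared with \cite[Section~4]{OP07}, is that $\al_v$ need not be trace preserving, so one cannot directly write the $P$-central state as a limit of vector states that are manifestly compatible with the conjugation action: the would-be conjugating operators $v \ot v\op$ do not preserve $\|\cdot\|_2$ on $\rL^2(A\ovt A\op)$. Overcoming this is exactly the content of replacing $v \ot v\op$-invariance by the $T(v)$-twisted relations \eqref{eq.Tv}, which hold with operators $T(v)$ that \emph{are} bounded and, crucially, admit finite-rank (i.e.\ $M\otalg M\op$) approximants $T(v,k)$; the boundedness and the algebraic approximants together guarantee that the weak-$*$ limit state $\Omega$ both exists and has its restriction to the relative commutant $\rB(\cdot)\cap(M\op)'$ well-defined and $Pp$-central. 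Checking that this $\Omega$ genuinely satisfies the left $Pp$-amenability definition of \cite[Definition~2.3]{PV11} — in particular that its restriction to $pMp$ equals $\tau$ and that it is $Pp$-central in the stated sense — is then a matter of unwinding property~(3) and the $T(v)$-relations, routine once the framework is set up. The identification of the nonzero central projection $p \in \cZ(P)$ is the other place requiring care, but it follows the normalizer template of \cite{OP07} verbatim once one notes that $\sN_M(A)$ and the $T(v)$-relations respect the central decomposition of $A$.
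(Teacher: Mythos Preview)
Your overall strategy---deform the weak-compactness vectors, project to $\rL^2(\Mtil\ominus M)$, and extract a $P$-central state from a weak-$*$ limit---is the right one, and you correctly identify that the relations \eqref{eq.Tv} are what must replace the normalizer conjugation from \cite{OP07}. The gap is in working with a \emph{fixed} $t_0$. Your displayed relation
\[
\lim_n \|(\al_{t_0}(v)\ot 1)\eta_n - (1\ot v\op)\eta_n\,T(v)\|_2 = 0
\]
is not correct as written: applying $\al_{t_0}\ot\id$ to \eqref{eq.Tv} produces $(\al_{t_0}\ot\id)(T(v))\in\Mtil\ovt M\op$ on the right, not $T(v)$, and right multiplication by such an element does \emph{not} commute with the projection $e_M^\perp\ot 1$. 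More fundamentally, left multiplication by $\al_{t_0}(v)\in\Mtil$ does not preserve $\rL^2(\Mtil\ominus M)$, so $(\al_{t_0}(v)\ot 1)\eta_n$ is not even a vector in the space you want; and even if some state were invariant under $\al_{t_0}(v)$, there is no mechanism to ``transfer $\al_{t_0}$ back'', since $\al_{t_0}$ does not preserve $\rL^2(\Mtil\ominus M)$ either. Shrinking $t_0$ helps for a \emph{given} $v$, but no single $t_0>0$ works for all $v\in\sN_M^0(A)$ simultaneously.

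The paper resolves this by letting $t\to 0$ along the construction. One takes a sequence $t_k\to 0$ with $\limsup_n\|(e_M^\perp\al_{t_k}\ot\id)(\xi_n)\|_2>\eps$ and builds a \emph{diagonal} subnet $\zeta_i$ of the doubly indexed family $(e_M^\perp\al_{t_k}\ot\id)(\xi_n)$, indexed by triples $i=(\cF,\cG,\delta)$ with $\cF\subset\sN_M^0(A)$ and $\cG\subset M$ finite. For each $i$ one first fixes $k$ large so that the \emph{algebraic} approximants $T(v,k)\in M\otalg M\op$ from \eqref{eq.Tv} are $\delta$-good for every $v\in\cF$; then one fixes $j$ large (so $t_j$ small) so that
\[
\limsup_n\|(x\ot 1)(\al_{t_j}\ot\id)(\xi_n)-(\al_{t_j}\ot\id)((x\ot 1)\xi_n)\|_2=\|\al_{-t_j}(x)-x\|_2<\delta
\]
for all $x\in\cF\cup\cG$, and likewise so that right multiplication by each fixed $T(v,k)$ almost commutes with $\al_{t_j}\ot\id$ (this is where finiteness of the sum in $T(v,k)$ is essential). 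This yields relations $(v\ot 1)\zeta_i\approx(1\ot v\op)\zeta_i\,S(v,i)$ with $v\in M$ (not $\al_{t_0}(v)$) and $S(v,i)$ in the unit ball of $M\otalg M\op$. Now everything commutes with $e_M^\perp\ot 1$, the computation $\Omega(Tv)=\Omega(vT)$ goes through directly on $\cS$, and $P$-centrality follows. This is precisely why the $T(v,k)\in M\otalg M\op$ were introduced in Proposition~\ref{prop.WC}: for a general $T(v)\in M\ovt M\op$ one has no uniform-in-$n$ control of $\|(\al_t\ot\id)(\xi_n)T(v)-(\al_t\ot\id)(\xi_n T(v))\|_2$, whereas for a finite sum in $M\otalg M\op$ this reduces to finitely many terms $\|\al_{-t}(a)-a\|_2$.
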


\begin{proof}
Denote by $e_M : \rL^2(\Mtil) \recht \rL^2(M)$ the orthogonal projection and write $e_M^\perp = 1 - e_M$. One of the following properties holds.
\begin{enumerate}
\item For every $\eps > 0$, there exists a $t_0 > 0$ such that for every $t \in \R$ with $|t| \leq t_0$, we have $\limsup_n \|(e_M^\perp \al_t \ot \id)(\xi_n)\|_2 \leq \eps$.
\item There exists an $\eps > 0$ and a sequence $t_k \in \R$ such that $\lim_k t_k = 0$ and such that for every $k$, we have $\limsup_n \| (e_M^\perp \al_{t_k} \ot \id)(\xi_n)\|_2 > \eps$.
\end{enumerate}
We prove that the first (resp.\ second) of these properties implies the first (resp.\ second) conclusion in the proposition.

Assume that (1) holds. By \cite[Lemma 2.1]{Po06}, the following transversality condition holds for every $t \in \R$ and $\xi \in \rL^2(M \ovt M\op)$.
$$\|(\al_{2t} \ot \id)(\xi) - \xi \|_2 \leq 2 \| (e_M^\perp \al_t \ot \id)(\xi)\|_2 \; .$$
Choose $\eps > 0$. We can then take a $t_0 > 0$ such that for every $t \in \R$ with $|t| \leq t_0$, we have $\limsup_n \|(\al_t \ot \id)(\xi_n) - \xi_n\|_2 \leq \eps$. Fix $t \in \R$ with $|t| \leq t_0$. We prove that $\|\al_t(a)-a\|_2 \leq 2 \sqrt{\eps}$ for every $a \in \cU(A)$, so that the first conclusion of the proposition indeed holds. Fix $a \in \cU(A)$. Because
$$\lim_n \langle (\al_t(a) \ot \abar) \, (\al_t \ot \id)(\xi_n) , (\al_t \ot \id)(\xi_n) \rangle = 1$$
and because $\limsup_n \|(\al_t \ot \id)(\xi_n) - \xi_n\|_2 \leq \eps$, we get that
$$\limsup_n | 1 - \langle (\al_t(a) \ot \abar) \xi_n , \xi_n \rangle | \leq 2\eps \; .$$
But the left hand side equals
$$\limsup_n | 1 - \langle (\al_t(a) a^* \ot 1) \xi_n , \xi_n \rangle | = | 1 - \tau(\al_t(a) a^*) | \; .$$
So, we have proved that $|1 - \tau(\al_t(a) a^*)| \leq 2 \eps$. Then also,
$$\| \al_t(a) - a \|_2^2 = 2 \real( 1- \tau(\al_t(a) a^*)) \leq 4 \eps \; .$$

Next assume that (2) holds with $\varepsilon > 0$ and $(t_k)_k$. We start by proving the following claim~: for every $x \in M$ and every $\delta > 0$, we have that for small enough $t \in \R$
$$\limsup_n \| (x \ot 1) (\al_t \ot \id)(\xi_n) - (\al_t \ot \id)((x \ot 1) \xi_n) \|_2 < \delta \; .$$
Indeed, it suffices to observe that the left hand side equals
$$\limsup_n \| ((\al_{-t}(x) - x) \ot 1) \xi_n \|_2 = \|\al_{-t}(x) - x\|_2$$
and that $\|\al_{-t}(x) - x\|_2 \recht 0$ as $t \recht 0$. Then also for every $T \in M \otalg M\op$ and every $\delta > 0$, we have that for small enough $t \in \R$
$$\limsup_n \| (\al_t \ot \id)(\xi_n) T - (\al_t \ot \id)(\xi_n T)\|_2 < \delta \; .$$

We construct a subnet $\zeta_i$ of the net $\zeta_{j,n} = (e_M^\perp \al_{t_j} \ot \id)(\xi_n)$ such that $\eps \leq \|\zeta_i\|_2 \leq 1$ for every $i$ and such that for every partial isometry $v \in \sN_M^0(A)$ and every $x \in M$, we have
\begin{equation}\label{eq.net-zeta-i}
\begin{split}
& \lim_i \|(v \ot 1) \zeta_i - (1 \ot v\op) \zeta_i S(v,i) \|_2 = 0 \;\; , \;\; \lim_i \|(v^* \ot 1) \zeta_i - (1 \ot \vbar) \zeta_i S(v,i)^* \|_2 = 0 \;\; , \\
& \limsup_i \| (x \ot 1) \zeta_i \|_2 \leq \|x\|_2 \;\; ,
\end{split}
\end{equation}
where the $S(v,i)$ are elements in the unit ball of $M \otalg M\op$. The index set of the net $\zeta_i$ is given by $i = (\cF,\cG,\delta)$ where $\cF$ is a finite set of partial isometries
in $\sN_M^0(A)$, $\cG \subset M$ is a finite subset and $\delta > 0$. Given $i = (\cF,\cG,\delta)$ and using the notation of \eqref{eq.Tv}, we take $k$ large enough such that for every $v \in \cF$, we have that
$$\limsup_n \|(v \ot 1) \xi_n - (1 \ot v\op) \xi_n T(v,k) \|_2 < \delta \quad\text{and}\quad \limsup_n \|(v^* \ot 1) \xi_n - (1 \ot \vbar) \xi_n T(v,k)^* \|_2 < \delta \;\;.$$
Using the claim in the previous paragraph, we then take $j$ large enough such that
$$\limsup_n \| (x \ot 1) (\al_{t_j} \ot \id)(\xi_n) - (\al_{t_j} \ot \id)((x \ot 1) \xi_n) \|_2 < \delta$$
for all $x \in \cF \cup \cG$ and such that
$$\limsup_n \| (\al_{t_j} \ot \id)(\xi_n) T(v,k) - (\al_{t_j} \ot \id)(\xi_n T(v,k))\|_2 < \delta$$
for all $v \in \cF$. We finally take $n$ large enough such that the vector $\zeta_i = (e_M^\perp \al_{t_j} \ot \id)(\xi_n)$ together with $S(v,i) := T(v,k)$ satisfies $\eps \leq \|\zeta_i\|_2 \leq 1$ and
\begin{align*}
& \|(v \ot 1) \zeta_i - (1 \ot v\op) \zeta_i S(v,i) \|_2 < 3\delta \quad , \quad \|(v^* \ot 1) \zeta_i - (1 \ot \vbar) \zeta_i S(v,i)^* \|_2 < 3\delta \quad ,\\
& \| (x \ot 1) \zeta_i \|_2 < \|x\|_2 + 2 \delta
\end{align*}
for all $v \in \cF$ and all $x \in \cG$. So we have found the net $\zeta_i$ satisfying \eqref{eq.net-zeta-i}.

Denote $P = \sN_M(A)\dpr$ and define $\cS$ as the commutant of the right $M$-action on $\rL^2(\Mtil \ominus M)$. Taking a subnet of the net $\zeta_i$, we may assume that the net of positive functionals $\cS \recht \C : T \mapsto \langle (T \ot 1) \zeta_i, \zeta_i \rangle$ converges weakly$^*$ to a positive functional $\Om$ on $\cS$. By \eqref{eq.net-zeta-i}, we get for all $T \in \cS$ and all partial isometries $v \in \sN_M^0(A)$ that
\begin{align*}
\Om(T v) & = \lim_i \langle (T v \ot 1) \zeta_i , \zeta_i \rangle = \lim_i \langle (T \ot v\op) \zeta_i S(v,i) , \zeta_i \rangle \\
& = \lim_i \langle (T \ot 1) \zeta_i , (1 \ot \vbar) \zeta_i S(v,i)^* \rangle = \lim_i \langle (T \ot 1) \zeta_i, (v^* \ot 1) \zeta_i \rangle \\
& = \lim_i \langle (v T \ot 1) \zeta_i, \zeta_i \rangle = \Om(vT) \;\;.
\end{align*}
Since $\|\zeta_i\|_2 \geq \eps$ for all $i$, also $\Om(1) \geq \eps$, so that $\Om$ is nonzero. By \eqref{eq.net-zeta-i}, we get that $\Om(x^* x) \leq \tau(x^*x)$ for all $x \in M$. The Cauchy--Schwarz inequality then implies that
$$|\Om(T v) - \Om(T w)|^2 \leq \Om(TT^*) \, \|v-w\|_2^2 \quad\text{and}\quad |\Om(v T - w T)|^2 \leq \|v-w\|_2^2 \, \Om(T^* T)$$
for all $T \in \cS$ and $v,w \in M$. Since $\Om(T v) = \Om(v T)$ when $v$ is a partial isometry in $\sN_M^0(A)$, taking linear combinations and $\|\,\cdot\,\|_2$-limits, it follows that $\Om$ is a nonzero $P$-central functional on $\cS$. Since $\Om|_M \leq \tau$, the restriction of $\Om$ to $M$ is normal. Denote by $p \in M$ the support projection of $\Om|_M$. Then, $p \in P' \cap M$. Since $A \subset P$ and $A'\cap M \subset P$, it follows that $p \in \cZ(P)$. As in \cite[Theorem 2.1]{OP07}, we conclude that the $pMp$-$M$-bimodule $p \rL^2(\Mtil \ominus M)$ is left $Pp$-amenable.
\end{proof}

Propositions \ref{prop.WC} and \ref{prop.malleable} will be key to prove that free Araki--Woods factors are strongly solid. We also have the following consequence.

\begin{thm}\label{thm.stable-strong-solidity}
Let $\Gamma$ be a countable group with the CMAP that admits a proper $1$-cocycle into an orthogonal representation that is weakly contained in the regular representation. Then, $M = \rL(\Gamma)$ is stably strongly solid: for every diffuse amenable von Neumann subalgebra $A \subset M$, we have that $\sN_M(A)\dpr$ remains amenable.
\end{thm}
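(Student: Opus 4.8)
The plan is to combine the weak compactness statement (Proposition \ref{prop.WC}) with the dichotomy of Proposition \ref{prop.malleable}, exactly in the spirit of Ozawa--Popa but using the stable normalizer throughout. First I would recall that by Peterson's theory (or the construction in \cite{Po03, Si10}) a countable group $\Gamma$ with a proper $1$-cocycle $c : \Gamma \recht \mathcal H$ into an orthogonal representation $\pi$ weakly contained in the regular representation gives rise to an $s$-malleable deformation $(\al_t)_{t \in \R}$ of $M = \rL(\Gamma)$ inside $\Mtil = M \ast_{\rL(\Z)}(\rL(\Z) \ovt \cdots)$ or the appropriate Gaussian-type dilation, with the key feature that the $M$-$M$-bimodule $\rL^2(\Mtil \ominus M)$ is weakly contained in the coarse bimodule $\rL^2(M) \ovt \rL^2(M)$, because $\pi$ is weakly contained in the regular representation. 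This last point is what makes conclusion (2) of Proposition \ref{prop.malleable} usable: a left $Pp$-amenable bimodule that is weakly contained in the coarse bimodule forces $Pp$ to be amenable, hence (since $p \in \cZ(P)$ and amenability passes to direct integrals / corners with expectation) $P$ itself is amenable.

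Next I would set up the argument properly. Let $A \subset M$ be diffuse amenable with expectation; since $M = \rL(\Gamma)$ is tracial, every subalgebra is automatically with expectation, so that hypothesis is free. Apply Proposition \ref{prop.WC} to get the net $\xi_n \in \rL^2(A \ovt A\op)^+$ satisfying (1)--(3) and \eqref{eq.Tv}, and feed this net together with the malleable deformation $(\al_t)$ into Proposition \ref{prop.malleable}. We land in one of two cases. In case (2), by the weak containment of $\rL^2(\Mtil \ominus M)$ in the coarse bimodule described above, left $Pp$-amenability of $p\rL^2(\Mtil\ominus M)$ forces $Pp$ to be amenable for a nonzero central projection $p \in \cZ(P)$; a standard maximality/exhaustion argument over such central projections then yields that $P = \sN_M(A)\dpr$ is amenable, and we are done.

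The remaining case (1) — uniform convergence $\al_t \to \id$ on $\cU(A)$ as $t \recht 0$ — is handled by Popa's spectral gap / rigidity argument: uniform proximity of $\al_t$ to the identity on $\cU(A)$ means $A$ is "rigid" for the deformation, which by the usual transversality and local quantization arguments (as in \cite[Theorem 4.1]{OP07} or \cite[Theorem 4.2]{Po06}, adapted to the semifinite-free deformation) forces $A \prec_M \rL(\Lambda)$ for some amenable (e.g.\ finite or amenable centralizer) subgroup; since $c$ is a \emph{proper} cocycle, the only subgroups on which the deformation is uniformly close to the identity are those with bounded cocycle, i.e.\ finite subgroups, so one concludes $A$ is not diffuse after a suitable cutdown, contradicting the diffuseness of $A$ — unless $A$ sits inside a part where the conclusion is vacuous. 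More precisely, the standard outcome is that case (1) simply cannot occur for a diffuse $A$ when $c$ is proper, so only case (2) survives and we again conclude amenability of $\sN_M(A)\dpr$.

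The main obstacle I anticipate is not in either case individually but in correctly propagating the \emph{stable} normalizer through Proposition \ref{prop.malleable}: one must be sure that the $P$-centrality of the limit functional $\Om$ in the proof of that proposition is obtained using only the approximate-intertwining data \eqref{eq.Tv} for partial isometries $v \in \sN_M^0(A)$ — which it is, since $\sN_M^0(A)$ generates $\sN_M(A)\dpr$ and linear combinations plus $\|\cdot\|_2$-limits recover all of $P$ — and that the bimodule $p\rL^2(\Mtil\ominus M)$ controlled there is genuinely the coarse-dominated one coming from the group cocycle. Once Proposition \ref{prop.malleable} is granted as a black box (which it is, in this excerpt), the only real work is invoking the known $s$-malleable deformation attached to a proper cocycle into a regularly-embeddable representation and checking the weak-containment-in-coarse property; the rest is the by-now-standard Ozawa--Popa endgame.
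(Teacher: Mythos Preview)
Your proposal is correct and follows exactly the approach the paper intends: the paper's own proof is a single sentence invoking Propositions \ref{prop.WC} and \ref{prop.malleable} together with the $s$-malleable deformation of \cite{Si10}, and you have simply unpacked those citations. Your handling of the two cases is the standard one --- properness of the cocycle rules out uniform convergence on $\cU(A)$ for diffuse $A$, and weak containment of $\rL^2(\Mtil\ominus M)$ in the coarse bimodule turns conclusion (2) of Proposition \ref{prop.malleable} into amenability of $Pp$, which a routine maximality argument over $\cZ(P)$ upgrades to amenability of $P$.
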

\begin{proof}
This follows immediately from Propositions \ref{prop.WC} and \ref{prop.malleable} by using the $s$-malleable deformation associated with a $1$-cocycle in \cite{Si10}.
\end{proof}

We needed the following well known lemma.

\begin{lem}\label{lem.equiv}
Let $(A,\tau)$ be a tracial von Neumann algebra. Denote by $\rE_\cZ : A \recht \cZ(A)$ the unique trace preserving conditional expectation. If $x,y \in A^+$ satisfy $\rE_\cZ(x) = \rE_\cZ(y)$, then there exists a sequence of elements $a_k \in A$ such that
$$\sum_k a_k a_k^* = x \quad\text{and}\quad \sum_k a_k^* a_k = y \; .$$
\end{lem}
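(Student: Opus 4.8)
The statement is a von Neumann algebra analogue of the classical fact that two positive measures with the same "total mass over each atom of the center" are related by a family of partial isometries. The natural strategy is to reduce to the case where $\cZ(A)$ is trivial, i.e.\ $A$ is a factor, and then use comparison theory. First I would observe that since $\rE_\cZ$ is $\cZ(A)$-bimodular, the hypothesis $\rE_\cZ(x) = \rE_\cZ(y)$ is compatible with cutting by a central projection $z$: replacing $x,y$ by $zx, zy$ preserves the hypothesis. Using a maximality/exhaustion argument over central projections, it therefore suffices to prove the following local statement: for each nonzero central projection $z_0$ there is a smaller nonzero central projection $z \le z_0$ on which the desired decomposition exists; one then patches the pieces together by summing over a maximal family of such mutually orthogonal central projections, concatenating the corresponding sequences $(a_k)$.

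For the local statement, the key step is a comparison-of-projections argument. Consider the spectral projections of $x$ and $y$. Using that in a von Neumann algebra any two projections are comparable after cutting by a suitable central projection, and that $\rE_\cZ$ of a projection detects its "central dimension", the equality $\rE_\cZ(x) = \rE_\cZ(y)$ forces, on small enough central pieces, that the relevant spectral projections of $x$ and $y$ are Murray--von Neumann equivalent. More precisely, the cleanest route is: it is enough to treat $x = p$ and $y = q$ projections with $\rE_\cZ(p) = \rE_\cZ(q)$, because a general positive $x$ can be approximated in $\|\cdot\|_1$ by finite spectral sums $\sum_i \lambda_i p_i$ and one handles the pieces $\lambda_i p_i$ separately (taking square roots of scalars into the $a_k$), with a limiting argument controlled by $\|\cdot\|_1$-convergence and the fact that $\sum_k a_k a_k^*$ converges in the strong topology. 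For projections $p,q$ with $\rE_\cZ(p)=\rE_\cZ(q)$: on a central projection where $p \precsim q$ via a partial isometry $w$ ($w^*w = p$, $ww^* \le q$), applying $\rE_\cZ$ to $q - ww^*$ gives $\rE_\cZ(q-ww^*) = \rE_\cZ(q) - \rE_\cZ(p) = 0$, hence $q = ww^*$ and $p \sim q$; by symmetry the comparison always resolves this way on a cutting, so globally $p \sim q$, and then a single $a_0 = w$ with $a_0 a_0^* = q$, $a_0^* a_0 = p$ does the job (all other $a_k = 0$).

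Assembling: decompose $x = \sum_i 2^{-i} p_i$ with $p_i$ spectral projections via a dyadic expansion so that the tails are $\|\cdot\|_1$-small, and similarly for $y$, arrange on small central pieces that matching spectral projections are equivalent, take the partial isometries $w_i$, scale appropriately, and collect all of them as the sequence $(a_k)$; strong convergence of $\sum a_k a_k^*$ to $x$ and of $\sum a_k^* a_k$ to $y$ follows since the partial sums are increasing and bounded with the correct $\|\cdot\|_1$-limits, and an increasing bounded net of positive operators converges strongly to its supremum. \textbf{The main obstacle} is the bookkeeping in the limiting/patching argument: ensuring that after passing to countably many central pieces and countably many spectral levels one still ends up with a single sequence $(a_k)$ whose two "Gram series" converge (in the strong operator topology) exactly to $x$ and $y$ rather than to something merely close; this is where one must be careful that the central cutting family is maximal (so its supremum is $1$) and that the $\|\cdot\|_1$-errors from truncating the spectral decompositions are summably small, so that the strong limits are forced to be $x$ and $y$ by uniqueness of limits.
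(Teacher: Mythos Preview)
Your argument for the projection case is correct and clean: if $p,q$ are projections with $\rE_\cZ(p)=\rE_\cZ(q)$, comparison theory together with $\rE_\cZ(w^*w)=\rE_\cZ(ww^*)$ forces $p\sim q$. The gap is in the reduction of the general case to this one. Writing $x=\sum_i 2^{-i}p_i$ and $y=\sum_i 2^{-i}q_i$ by \emph{separate} dyadic (or spectral) expansions, the hypothesis $\rE_\cZ(x)=\rE_\cZ(y)$ only yields
\[
\sum_i 2^{-i}\,\rE_\cZ(p_i)=\sum_i 2^{-i}\,\rE_\cZ(q_i),
\]
not $\rE_\cZ(p_i)=\rE_\cZ(q_i)$ level by level. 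So there is nothing that lets you ``match'' $p_i$ with $q_i$. Even after cutting to a factor, where the hypothesis becomes $\tau(x)=\tau(y)$, the spectral projections of $x$ and $y$ have no reason to have equal traces at corresponding scales; a single partial isometry per level will not do. Making your route work would require a genuine transportation argument (splitting each $p_i$ and each $q_j$ into subprojections and pairing those), which is not just bookkeeping and is nowhere indicated in your sketch. The ``main obstacle'' you flag is therefore not convergence of tails but the pairing itself.

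The paper's proof avoids this entirely by running the maximality argument on the families $(a_k)$ rather than on central projections. Consider countable families $(a_k)$ in $A$ with $\sum_k a_k a_k^*\le x$ and $\sum_k a_k^* a_k\le y$; among these take one that is maximal (equivalently, build it recursively, at each step adding a nonzero element to the family if possible). For such a family, the remainders $x':=x-\sum_k a_k a_k^*$ and $y':=y-\sum_k a_k^* a_k$ are positive and satisfy $\rE_\cZ(x')=\rE_\cZ(y')$, precisely because $\rE_\cZ(a^*a)=\rE_\cZ(aa^*)$ for every $a\in A$. If $x'\neq 0$ then $y'\neq 0$ and their central supports agree; choose $\eps>0$ so that the spectral projections $p:=1_{[\eps,\infty)}(x')$ and $q:=1_{[\eps,\infty)}(y')$ are nonzero, use comparison to find a nonzero partial isometry $w$ with $ww^*\le p$ and $w^*w\le q$, and set $a:=\eps^{1/2}w$, which satisfies $aa^*\le x'$ and $a^*a\le y'$ --- contradicting maximality. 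Hence $x'=0$, and then $\rE_\cZ(y')=0$ forces $y'=0$. Your projection observation is exactly what is needed to produce this last $w$, but it should feed a maximality argument on the $a_k$'s, not a spectral matching of $x$ against $y$.
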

\begin{proof}
The result follows from a maximality argument and the equality $\rE_\cZ(a^* a) = \rE_\cZ(a a^*)$ for all $a \in A$.
\end{proof}

\begin{rem}
By Proposition \ref{prop.quasi-normalizer}, for the same groups $\Gamma$ as in Theorem \ref{thm.stable-strong-solidity}, the group von Neumann algebra $M = \rL(\Gamma)$ has the following property: for every diffuse abelian von Neumann subalgebra $A \subset M$, the quasi-normalizer $\QN_M(A)\dpr$ is amenable. However, we leave open the question whether $\QN_M(A)\dpr$ is amenable for every diffuse amenable subalgebra $A \subset M$. Note here that in specific case of the free group factors $M = \rL(\F_n)$ and using free entropy dimension, it was proved in \cite{Vo95} that for a diffuse abelian $A \subset M$, the $*$-algebra $\QN_M(A)$ cannot be dense in $M$. In \cite{Ha15}, this result was generalized to arbitrary diffuse amenable subalgebras $A \subset M$.
\end{rem}

\subsection{Relative stable strong solidity}

In Proposition \ref{prop.WC}, we showed how to adapt the weak compactness of \cite{OP07} so as to cover the stable normalizer $\sN_M(A)\dpr$ rather than the normalizer $\cN_M(A)\dpr$. In exactly the same way, the methods of \cite[Section 5.1]{PV11} can be extended to the stable normalizer. As a consequence, one obtains the following improvement of \cite[Theorem 1.6]{PV11}.

\begin{thm}\label{thm.relative-stable-strong-solidity}
Let $\Gamma$ be a countable group with the CMAP that admits a proper $1$-cocycle into an orthogonal representation that is weakly contained in the regular representation. Assume that $\Gamma \actson (B,\tau)$ is any trace preserving action on the tracial von Neumann algebra $(B,\tau)$. Put $M = B \rtimes \Gamma$ and let $A \subset M$ be a von Neumann subalgebra that is amenable relative to $B$. Then at least one of the following statements holds.
\begin{itemize}
\item [$(1)$] $A \prec_M B$.
\item [$(2)$] $\sN_M(A)\dpr$ remains amenable relative to $B$.
\end{itemize}
\end{thm}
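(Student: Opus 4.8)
The plan is to mimic the proof of Theorem~\ref{thm.stable-strong-solidity}, but now carrying along the base algebra $B$ throughout. First I would recall that $(B \rtimes \Gamma, \tau)$ carries the $s$-malleable deformation $(\widetilde M, (\al_t)_{t \in \R}, \be)$ coming from the proper $1$-cocycle $c : \Gamma \recht \cH$ as constructed in \cite{Si10}, where $\widetilde M = M * \rL(\cH)$-type dilation (more precisely $\widetilde M = (B \ovt \rL(\cH)) \rtimes \Gamma$ with the shifted action) and $\al_t$ is the Gaussian rotation by angle $t$. The key point, identical to \cite[Section 5.1]{PV11}, is that the $M$-$M$-bimodule $\rL^2(\widetilde M \ominus M)$ is, as an $M$-$M$-bimodule, a direct sum of bimodules of the form $\rL^2(M) \ot_B \cK \ot_B \rL^2(M)$ with $\cK$ coming from the representation weakly contained in the regular representation of $\Gamma$ over $B$; consequently, \emph{left $P$-amenability of $p\rL^2(\widetilde M \ominus M)$ as a $pMp$-$M$-bimodule forces $P p$ to be amenable relative to $B$.} This is exactly the translation mechanism of \cite[Proposition 5.2]{PV11} and I would cite it essentially verbatim.

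Next I would apply Proposition~\ref{prop.WC} to the pair $A \subset M$: since $A$ is amenable relative to $B$, the proof of \cite[Theorem 3.5]{OP07} as adapted there still produces the net of positive vectors $\xi_n \in \rL^2(A \ovt A\op)$ satisfying (1)--(3) and \eqref{eq.Tv}. One subtlety: Proposition~\ref{prop.WC} as stated assumes $A$ amenable, not merely amenable relative to $B$. The fix, again following \cite[Section 5.1]{PV11}, is that one does not run weak compactness on $A$ directly but on a suitable amenable-over-$B$ replacement: one uses that amenability relative to $B$ plus CMAP of $M$ still gives the functionals $\mu_n^Q$, $\om_n^Q$ for $Q$ amenable relative to $B$, because the averaging trick of \cite{OP07} only needs a hypertrace on $\rB(\rL^2 Q) \cap (Q\op)'$ \emph{restricting to $\tau$ on $M$} — which is precisely relative amenability — and the Lemma~\ref{lem.amplify} argument showing $(A \cup \{x,x^*\})\dpr$ stays amenable relative to $B$ goes through since relative amenability is stable under the dilation construction there. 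So the honest statement to prove and use is: \emph{if $A \subset M$ is amenable relative to $B$ then so is $(A \cup \{v, v^*\})\dpr$ for $v \in \sN_M(A)$}, and then Proposition~\ref{prop.WC}'s proof applies mutatis mutandis with ``amenable'' replaced by ``amenable relative to $B$'' everywhere.

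Then I would feed this $\xi_n$ into the dichotomy of Proposition~\ref{prop.malleable}, whose proof is deformation-theoretic and does not care whether $A$ is amenable or amenable relative to $B$: it only uses the conclusions of Proposition~\ref{prop.WC}. We land in one of two cases. In case~(1), $\lim_{t \to 0} \sup_{a \in \cU(A)} \|\al_t(a) - a\|_2 = 0$, i.e.\ the deformation converges uniformly on the unit ball of $A$; by the standard spectral-gap/transversality argument of \cite[Section 4]{Po06} combined with the structure of the Sinclair deformation, uniform convergence on $A$ together with properness of $c$ forces $A \prec_M B$ — this is where conclusion (1) of the theorem comes from, and it is the same implication used in \cite[Theorem 1.6]{PV11}. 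In case~(2), we get a nonzero $p \in \cZ(P)$, $P = \sN_M(A)\dpr$, with $p\rL^2(\widetilde M \ominus M)$ left $Pp$-amenable as a $pMp$-$M$-bimodule; by the bimodule computation recalled in the first paragraph this gives that $Pp$ is amenable relative to $B$. A standard maximality/exhaustion argument over the central projection $p$ (take $p$ maximal with this property; if $p \neq 1$, rerun the whole argument inside $(1-p)M(1-p)$, noting that $A(1-p)$ is still amenable relative to $B$ and $\sN_{(1-p)M(1-p)}(A(1-p)) \supset P(1-p)$ by Lemma~\ref{lem.prop-PN}(1), and either land in case~(1) — which on that corner still yields $A(1-p) \prec B$ hence $A \prec B$, so we are in conclusion~(1) — or enlarge $p$, a contradiction) upgrades this to $P$ itself being amenable relative to $B$, which is conclusion~(2).

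The main obstacle I expect is the first paragraph's bookkeeping: making precise, in the relative setting, that the net $\xi_n$ of Proposition~\ref{prop.WC} exists when $A$ is only amenable relative to $B$ rather than amenable. Concretely, one must check that the Haagerup-style averaging producing $\om_n^Q$ from $\mu_n^Q$, and the amplification Lemma~\ref{lem.amplify} guaranteeing $(A \cup \{v,v^*\})\dpr$ is well-behaved, both survive when ``amenable'' is relaxed to ``amenable relative to $B$''. This is routine — \cite[Section 5.1]{PV11} does exactly this for the normalizer — but it is the one place where the proof genuinely differs from the absolute case of Theorem~\ref{thm.stable-strong-solidity}, so I would state it as a lemma (``Proposition~\ref{prop.WC} holds with $A$ amenable relative to $B$, with $\xi_n$ a net as in its statement, provided one reads \eqref{eq.Tv}'s $T(v), T(v,k)$ as elements of $M \ovt M\op$ resp.\ $M \otalg M\op$'') and prove it by transcribing the proof of Proposition~\ref{prop.WC} with the three occurrences of ``amenable'' (the definition of $\mu_n^Q$, the invocation of Lemma~\ref{lem.amplify}, and the final line) replaced by their relative analogues.
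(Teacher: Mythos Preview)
Your proposal is correct and follows the same approach as the paper. The paper itself gives no detailed proof of this theorem: it simply asserts that ``in exactly the same way, the methods of \cite[Section 5.1]{PV11} can be extended to the stable normalizer'' via the adaptation already carried out in Proposition~\ref{prop.WC}, yielding an improvement of \cite[Theorem 1.6]{PV11}. Your outline spells out precisely this adaptation --- the relative version of Lemma~\ref{lem.amplify}, the relative weak compactness, the dichotomy of Proposition~\ref{prop.malleable}, and the bimodule translation to relative amenability via \cite{PV11} --- and is in fact more detailed than what the paper provides.
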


As a consequence of Theorem \ref{thm.relative-stable-strong-solidity}, also the results of \cite{Io12,Va13} on the normalizer of subalgebras $A$ of amalgamated free products $M = M_1 *_B M_2$ generalize to the stable normalizer $\sN_M(A)\dpr$.

On the other hand, it is not clear to us whether Proposition \ref{prop.WC} and Theorems \ref{thm.stable-strong-solidity} and \ref{thm.relative-stable-strong-solidity} remain valid if we replace CMAP by weak amenability because so far, we were unable to extend the methods of \cite[Section 4]{Oz10} to the stable normalizer.

\section{Proof of the Main Theorem}

The following general lemma will be the key to deduce strong solidity results for type $\rm{III}$ factors from structural results of their continuous core.

\begin{lem}\label{lem-modular}
Let $Q \subset M$ be any inclusion of $\sigma$-finite von Neumann algebras with faithful normal conditional expectation $\rE_Q : M \to Q$. Let $\varphi \in M_\ast$ be any faithful state such that $\varphi \circ \rE_Q = \varphi$. Then for any $u \in \mathcal N_M(Q)$ and any $t \in \R$, we have $u^*\sigma_t^\varphi(u) \in Q \vee (Q' \cap M)$.
\end{lem}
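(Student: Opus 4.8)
The plan is to exploit the interplay between the normalizer $\cN_M(Q)$ and the modular automorphism group $\sigma^\varphi$, given that $\varphi$ is $\rE_Q$-invariant. The starting observation is that since $\varphi = \varphi \circ \rE_Q$, the state $\varphi$ restricts to a faithful normal state on $Q$ with modular group $\sigma^{\varphi|_Q}_t = \sigma^\varphi_t|_Q$, so in particular $Q$ is globally invariant under $\sigma^\varphi$. Fix $u \in \cN_M(Q)$. The conjugation automorphism $\theta = \Ad u$ maps $Q$ onto $Q$, and the state $\varphi_u := \varphi \circ \theta$ is another faithful normal state on $M$ whose restriction to $Q$ is $\varphi|_Q \circ \theta$. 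I would then compute the modular group of $\varphi_u$: by the standard covariance of modular groups under automorphisms, $\sigma^{\varphi_u}_t = \theta^{-1} \circ \sigma^\varphi_t \circ \theta$, i.e.\ $\sigma^{\varphi_u}_t(x) = u^* \sigma^\varphi_t(u x u^*) u$.

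Next I would bring in Connes's cocycle Radon--Nikodym theorem. The key point is that $\varphi|_Q$ and $\varphi_u|_Q = (\varphi|_Q)\circ(\Ad u|_Q)$ are both faithful normal states on $Q$, and more importantly $\varphi$ and $\varphi_u$ are both faithful normal states on $M$ that agree after composing with $\rE_Q$ in a suitable sense — indeed $\varphi_u \circ \rE_Q' = \varphi_u$ where $\rE_Q' = \theta^{-1}\circ \rE_Q \circ \theta$ is again a faithful normal conditional expectation onto $Q$. The cleanest route is: since both $\varphi$ and $\varphi_u$ restrict to states on $Q$ that are preserved by (possibly different) conditional expectations onto $Q$, the Connes cocycle $(D\varphi_u : D\varphi)_t$ lies in $Q$. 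Concretely, one shows $[D\varphi_u : D\varphi]_t \in Q$ using that $\varphi_u$ and $\varphi$ have the same restriction to $Q' \cap M$ up to something controlled, or more directly by the characterization that the cocycle is in $Q$ iff $\sigma^\varphi_t$ and $\sigma^{\varphi_u}_t$ agree on $Q' \cap M$ modulo inner, together with the fact that $\rE_Q$ is $\varphi$-preserving. Let me instead use the more hands-on identity: $(D\varphi_u : D\varphi)_t = u^* \sigma^\varphi_t(u)$, which follows from the cocycle identity characterizing $(D\psi:D\varphi)_t$ when $\psi = \varphi(u \cdot u^*)$ for a unitary $u$ — this is a classical computation. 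So the element we must analyze is precisely $w_t := u^*\sigma^\varphi_t(u) = (D\varphi_u : D\varphi)_t$.

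With $w_t = (D\varphi_u:D\varphi)_t$ in hand, the statement reduces to showing $w_t \in Q \vee (Q'\cap M)$. Here I would use that $\varphi_u$ and $\varphi$ agree on $Q$ up to an automorphism coming from $\cN_M(Q)$: more precisely, consider the state $\psi$ on $M$ defined by $\psi = \varphi|_Q \circ \rE_Q$ — but that is just $\varphi$ again. The honest obstruction is that $\varphi_u|_Q \ne \varphi|_Q$ in general, so $w_t$ need not be central or in $Q$; rather one expects $w_t$ to be a product of an element of $Q$ (the cocycle between $\varphi_u|_Q$ and $\varphi|_Q$ within $Q$) and an element of $Q' \cap M$. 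To make this precise I would introduce the intermediate state $\varphi' := (\varphi_u|_Q) \circ \rE_Q$ on $M$. Then by the chain rule $(D\varphi_u : D\varphi)_t = (D\varphi_u : D\varphi')_t \,(D\varphi' : D\varphi)_t$. For the second factor: $\varphi'$ and $\varphi$ both factor through $\rE_Q$ and differ only in their restriction to $Q$, so $(D\varphi':D\varphi)_t = (D(\varphi_u|_Q):D(\varphi|_Q))_t \in Q$ by the relative version of the cocycle theorem for states preserved by the same conditional expectation $\rE_Q$. For the first factor: $\varphi_u$ and $\varphi'$ have the \emph{same} restriction to $Q$ (both equal $\varphi_u|_Q$), hence by \cite[Theorem IX.4.2]{Ta03} or the standard result on states with common restriction and the modular group acting on $Q' \cap M$, the cocycle $(D\varphi_u:D\varphi')_t$ lies in the relative commutant $Q'\cap M$. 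Multiplying, $w_t \in Q \cdot (Q'\cap M) \subset Q \vee (Q'\cap M)$.

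The main obstacle I anticipate is the verification that $(D\varphi_u : D\varphi')_t \in Q' \cap M$, i.e.\ that two faithful normal states on $M$ with the same restriction to the subalgebra $Q$ — but here also with $\varphi'$ of the special form $(\text{state on }Q)\circ\rE_Q$ — have Connes cocycle landing in $Q' \cap M$. This is where one must use that $\varphi'$ is $\rE_Q$-invariant so that $\sigma^{\varphi'}_t$ restricts to $\sigma$ on $Q$ and commutes with $\rE_Q$, and then apply the characterization of the cocycle $(D\varphi_u:D\varphi')_t$ as the Radon--Nikodym cocycle, noting it is fixed by $\sigma^{\varphi'}_t|_Q$ hence commutes with $Q$. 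I would handle this by working inside the balanced extension $M \otimes M_2(\C)$ with the state $\begin{pmatrix}\varphi_u & \\ & \varphi'\end{pmatrix}$ and checking that the off-diagonal modular flow is trivial on the copy of $Q$, giving the required containment. The rest of the argument is bookkeeping with the cocycle chain rule and covariance formulas, which is routine.
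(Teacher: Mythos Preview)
Your proposal is correct and follows essentially the same route as the paper: both factor $u^*\sigma_t^\varphi(u)$ as a product of an element of $Q$ (the Connes cocycle $w_t = [D(\varphi_u|_Q):D(\varphi|_Q)]_t$, which is exactly your $(D\varphi':D\varphi)_t$) and an element of $Q'\cap M$. The paper's execution is more direct --- rather than introducing the intermediate state $\varphi'$ and invoking the chain rule, it simply observes that $w_t \in \mathcal U(Q)$ and $u^*\sigma_t^\varphi(u)$ both satisfy $\Ad(\,\cdot\,)\circ\sigma_t^\varphi|_Q = \sigma_t^{\varphi_u}|_Q$, whence $w_t^{\,*}\bigl(u^*\sigma_t^\varphi(u)\bigr) \in Q'\cap M$; this one--line comparison also dissolves the ``main obstacle'' you anticipated.
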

\begin{proof}
Put $\varphi_u := \varphi \circ \Ad(u)$ and note that $Q$ is globally invariant under the modular automorphism groups $\sigma^\varphi$ and $\sigma^{\varphi_u}$. Put $\psi := \varphi |_Q$ and $\psi_u := \varphi_u |_Q$. Observe that $\sigma_t^{\psi_u}(x) = \sigma_t^{\varphi_u}(x)$ and $\sigma_t^\psi (x) = \sigma_t^\varphi (x)$ for every $t \in \R$ and every $x \in Q$. By \cite[Th\'eor\`eme 1.2.1]{Co72}, there exists a $\sigma$-strongly continuous map $\R \to \mathcal U(Q): t \mapsto w_t$ such that $\sigma_t^{\psi_u} = \Ad(w_t) \circ \sigma_t^\psi$ for every $t \in \R$.

By \cite[Lemme 1.2.3(c)]{Co72}, we have $\sigma_t^{\varphi_u} = \Ad(u^* \sigma_t^\varphi(u)) \circ \sigma_t^\varphi$ for every $t \in \R$. Since
$$w_t \, \sigma_t^\varphi(x) \, w_t^* = \sigma_t^{\varphi_u}(x) = (u^* \sigma_t^\varphi(u)) \, \sigma_t^\varphi(x) \, (u^* \sigma_t^\varphi(u))^*$$
for every $x \in Q$, it follows that $w_t^* \, (u^* \sigma_t^\varphi(u)) \in Q' \cap M$. Therefore, we have $u^* \sigma_t^\varphi(u) \in Q \vee (Q' \cap M)$ for every $t \in \R$.
\end{proof}

\begin{proof}[Proof of the main theorem]
First, observe that every free Araki--Woods factor is contained with expectation in a free Araki--Woods factor of type $\rm{III}_1$. Indeed, for any orthogonal representation $U : \R \curvearrowright H_\R$, we have the following inclusion with expectation
\[\Gamma(H_\R,U)\dpr \subset \Gamma(H_\R \oplus \rL^2_\R(\R),U \oplus \lambda)\dpr,\]
where $\lambda : \R \curvearrowright \rL^2_\R(\R)$ is the regular representation and $\Gamma(H_\R \oplus \rL^2_\R(\R),U \oplus \lambda)\dpr$ is a free Araki--Woods factor of type ${\rm III_1}$. Since strong solidity is preserved under taking diffuse subalgebras with expectation, we may assume that $M := \Gamma(H_\R,U)\dpr$ is a free Araki--Woods factor of type $\rm{III}_1$.

Let $Q \subset M$ be any diffuse amenable von Neumann subalgebra with expectation. We want to prove that $P := \cN_M(Q)\dpr$ is amenable. Fix a faithful state $\psi \in M_\ast$ such that $Q$ is globally invariant under the modular automorphism group $\sigma^\psi$.
Observe that $Q' \cap M$, $\mathcal N_M(Q)\dpr$ and $\mathcal N_M(Q \vee (Q' \cap M))\dpr$ are all globally invariant under the modular automorphism group $\sigma^\psi$. Since $M$ is solid by \cite{Oz03, VV05} (see also \cite[Theorem A]{HR14}), $Q' \cap M$ is amenable and so is $\mathcal Q = Q \vee (Q' \cap M)$. Observe that $\mathcal Q' \cap M = \mathcal Z(\mathcal Q)$. Since the inclusion $\mathcal N_M(Q)\dpr \subset \mathcal N_M(\mathcal Q)\dpr$ is with expectation, it suffices to prove that $\cN_M(\mathcal Q)\dpr$ is amenable. Therefore, without loss of generality, we may further assume that $\mathcal Q = Q$, that is, $Q' \cap M = \mathcal \cZ(Q)$.

Now, assume by contradiction that $P = \cN_M(Q)\dpr$ is not amenable. Then there exists a nonzero central projection $z \in \cZ(P)$ such that $Pz$ has no amenable direct summand. Since $\cZ(P) \subset Q' \cap M = \cZ(Q)$, we have $z \in \cZ(Q)$. Then we have $Qz \subset zMz$, $(Qz)' \cap zMz = \cZ(Qz)$ and $\cN_{zMz}(Qz)\dpr = Pz$ has no amenable direct summand. Since $zMz \cong M$, we may replace $Q \subset M$ by $Qz \subset zMz$ and assume without loss of generality that $P =  \cN_M(Q)\dpr$ has no amenable direct summand.

{\bf Claim.}
We have $\core_\psi(P) \subset \mathcal N_{\core_\psi(M)}(\core_\psi(Q))\dpr$.

Since $\lambda_\psi(t) \in \mathcal U(\core_\psi(Q))$, we have $\lambda_\psi(t) \in \mathcal N_{\core_\psi(M)}(\core_\psi(Q))$ for every $t \in \R$. Let now $u \in \cN_M(Q)$. Then we have $\pi_\psi(u) \pi_\psi(Q) \pi_\psi(u)^* = \pi_\psi(Q) \subset \core_\psi(Q)$. Moreover since $Q' \cap M = \mathcal Z(Q)$, Lemma \ref{lem-modular} shows that we have $u\sigma^\psi_t(u^*) \in \mathcal U(Q)$  for all $t \in \R$. Therefore, we have $\pi_\psi(u)\lambda_\psi(t) \pi_\psi(u)^* = \pi_\psi(u\sigma^\psi_t(u^*))\lambda_\psi(t) \in \pi_\psi(Q)\lambda_\psi(t) \subset \core_\psi(Q)$ for all $t \in \R$. Altogether, we have that $\pi_\psi(u)\core_\psi(Q)\pi_\psi(u)^* \subset \core_\psi(Q)$. Replacing $u \in \cN_M(Q)$ by $u^* \in  \cN_M(Q)$, we obtain $\pi_\psi(u)^*\core_\psi(Q)\pi_\psi(u) \subset \core_\psi(Q)$ and hence $\pi_\psi(u)\core_\psi(Q)\pi_\psi(u)^* = \core_\psi(Q)$. Since
$$\core_\psi(P) = \bigvee \left\{ \pi_\psi(u), \lambda_\psi(t) \mid u \in \mathcal N_M(Q), t \in \R\right\},$$ we finally have $\core_\psi(P) \subset \mathcal N_{\core_\psi(M)}(\core_\psi(Q))\dpr$. This proves the claim.

Since $P$ has no amenable direct summand, $\core_\psi(P)$ has no amenable direct summand either by \cite[Proposition 2.8]{BHR12}. The above claim further implies that $\cN_{\core_\psi(M)}(\core_\psi(Q))\dpr$ has no amenable direct summand.
Denote by $\varphi = \varphi_U$ the free quasi-free state on $M$ and put $M_0 := \core_\varphi(M)$, $Q_0 := \Pi_{\varphi,\psi}(\core_\psi(Q))$ and $P_0 := \Pi_{\varphi,\psi}(\cN_{\core_\psi(M)}(\core_\psi(Q))\dpr) = \cN_{M_0}(Q_0)\dpr$.

Since $Q$ is diffuse, \cite[Lemma 2.5]{HU15} and the paragraph following \cite[Theorem 2.2]{HU15} imply that $pQ_0p \nprec_{M_0} \rL_\varphi(\R)$ for any nonzero finite trace projection $p \in \Pi_{\varphi,\psi}(\rL_\psi(\R))$. Take such a projection $p \in \Pi_{\varphi,\psi}(\rL_\psi(\R)) \subset Q_0$, so that
\begin{equation}\label{nonembed}
pQ_0p \nprec_{M_0} \rL_\varphi(\R).
\end{equation}
Since $M$ is a type $\rm{III}_1$ factor, $M_0$ is a type ${\rm II_\infty}$ factor and hence there exists a unitary $u \in \mathcal U(M_0)$ such that $upu^* \in \rL_\varphi(\R)$. Therefore, up to conjugating $Q_0$ (and $P_0$) by a unitary in $\mathcal U(M_0)$, we may assume that $p \in \rL_\varphi(\R)$. We still have \eqref{nonembed}.

By \cite[Theorem A]{HR10}, $M$ has the CMAP and so does $M_0$ by \cite[Lemma 4.6 and Theorem 4.9]{AD93}. Following \cite[Section 4.1]{HR10}, we know that $N := pM_0p$ admits an $s$-malleable deformation in the sense of Popa such that the $N$-$N$-bimodule $\rL^2(\widetilde N \ominus N)$ is weakly contained in the coarse $N$-$N$-bimodule $\rL^2(N) \otimes \rL^2(N)$. Then \cite[Theorem 4.3]{HR10} and \eqref{nonembed} imply that this deformation does not converge uniformly on $\cU(pQ_0p)$. Put $P_1  := \sN_{pM_0p}(pQ_0p)\dpr$ and observe that $pP_0p \subset P_1$. Proposition \ref{prop.malleable} implies that there exists a nonzero projection $q \in \cZ(P_1)$ such that the $qNq$-$N$-bimodule $q\rL^2(\widetilde N \ominus N)$ is left $P_1q$-amenable. Therefore, the coarse $qNq$-$N$-bimodule is left $P_1q$-amenable by \cite[Corollary 2.5]{PV11} which further implies that $P_1q$ is amenable by \cite[Proposition 2.4]{PV11}. This however contradicts the fact that $P_0$ has no amenable direct summand.
\end{proof}

\section{Further remarks on stable strong solidity}

In this section, we clarify the relationship between stable strong solidity and strong solidity. Lemma \ref{lem.prop-PN}(3) shows that if the infinite amplification $M \ovt \rB(\ell^2(\N))$ of a diffuse $\sigma$-finite von Neumann algebra $M$ is strongly solid then $M$ is stably strongly solid. We will show that the converse is also true. First, we prove the following useful lemma.

\begin{lem}
\label{semifinite}
Let $(M, \Tr)$ be any semifinite von Neumann algebra endowed with a fns trace.  Assume that $N \subset M$ is a subalgebra with expectation $\rE_N : M \to N$ such that $N' \cap M \subset N$.

Then $\Tr$ is semifinite on $N$ and $\Tr \circ \rE_N = \Tr$.
\end{lem}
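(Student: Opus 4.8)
The plan is to reduce both statements to a single projection-by-projection computation. First I would show that $\Tr$ restricts to a semifinite weight on $N$. Since $\rE_N : M \to N$ is a faithful normal conditional expectation, for any nonzero projection $e \in N$ with $\Tr(e) < +\infty$ the element $e$ is itself a finite-trace projection in $N$, so it suffices to produce enough of them: I would take any nonzero $p \in M$ with $\Tr(p) < +\infty$ and observe that $q := $ (support projection of $\rE_N(p)$) is a nonzero projection in $N$; since $N' \cap M$ is contained in $N$ (hence the central support of $q$ in $N$ coincides with its central support in $M$ — this is where the hypothesis $N' \cap M \subset N$ enters), the projections $q$ obtained this way have central supports in $N$ that sum up to $1$, and on a sufficiently small corner $\Tr$ is finite. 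More directly: for $p$ a finite-trace projection of $M$, the spectral projections of $\rE_N(p)$ corresponding to $[\varepsilon, \|\rE_N(p)\|]$ lie in $N$ and have finite trace because $\Tr$ is normal and $\rE_N(p) \le \rE_N(1)$-dominated estimates give $\Tr(\text{such a spectral projection}) \le \varepsilon^{-1}\Tr(\rE_N(p))$, while $\Tr(\rE_N(p)) = \Tr(p) < \infty$ once we know the trace-preservation on positive elements — so in fact the two assertions are intertwined and I would prove trace-preservation first on the set of elements where everything is finite, then bootstrap.

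So the core step is: $\Tr \circ \rE_N = \Tr$. The standard approach is to reduce to a finite von Neumann algebra by cutting with finite-trace projections from $N$, but the subtlety is exactly that a finite-trace projection of $M$ need not lie in $N$. The hypothesis $N' \cap M \subset N$ fixes this: it forces the center of $N$ to contain the center of $M$ restricted appropriately, and more importantly it guarantees that for a projection $z \in \cZ(N)$ we have $z \in \cZ(M)$-behaviour on the corner, so that cutting down by a central projection of $N$ with finite trace reduces us to $(zMz, \Tr|_{zMz})$ with $zMz$ having $z$ as identity and $zNz = Nz$ still satisfying $(Nz)'\cap zMz \subset Nz$. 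Once inside a finite von Neumann algebra with a finite faithful normal trace $\tau$, the conditional expectation $\rE_N$ is automatically $\tau$-preserving: this is the classical fact that a normal conditional expectation onto a subalgebra $N$ of a finite von Neumann algebra preserves the (unique up to the center) trace, which one proves by noting $\tau \circ \rE_N$ is a normal trace on $M$ agreeing with $\tau$ on $N \supset \cZ(M)$, and two normal traces on a finite von Neumann algebra agreeing on the center are equal — here again $N' \cap M \subset N$ gives $\cZ(M) \subset N$, which is what makes the "agreeing on the center" step work.

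Assembling: choose an increasing net of projections $z_i \in \cZ(N)$ with $z_i \uparrow 1$ and $\Tr(z_i) < \infty$ (possible by semifiniteness of $\Tr$ on $\cZ(M) \subset N$), apply the finite case to each $z_i M z_i$ to get $\Tr(z_i \rE_N(x) z_i) = \Tr(z_i x z_i)$ for $x \in M^+$, and let $i \to \infty$ using normality of $\Tr$ and of $\rE_N$; this yields $\Tr(\rE_N(x)) = \Tr(x)$ for all $x \in M^+$, hence in particular semifiniteness of $\Tr|_N$ since finite-trace elements of $M$ map to finite-trace elements of $N$ whose spectral projections then witness semifiniteness. The main obstacle, as indicated, is purely the bookkeeping around the fact that finite-trace projections need not be in $N$; the hypothesis $N'\cap M \subset N$ is precisely designed to let one replace them by central projections of $N$, and I expect that to be the only place where real care is needed.
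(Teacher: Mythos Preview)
Your reduction step has a genuine gap. You plan to choose projections $z_i \in \cZ(N)$ with $z_i \uparrow 1$ and $\Tr(z_i) < +\infty$, citing ``semifiniteness of $\Tr$ on $\cZ(M) \subset N$''. But a fns trace on $M$ is almost never semifinite on $\cZ(M)$: already for $M = \rB(H)$ one has $\cZ(M) = \C 1$ and $\Tr(1) = +\infty$. More generally, $\cZ(N)$ need not contain \emph{any} nonzero finite-trace projection --- take $N$ a properly infinite factor inside $M$, for instance $N = M$ itself when $M$ is a type ${\rm II_\infty}$ factor. Producing finite-trace projections in $N$ is precisely the semifiniteness conclusion you are after, so the reduction is circular. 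There is a second issue: in the finite case you assert that $\tau \circ \rE_N$ is a normal trace on $M$ and then compare it to $\tau$ via the center, but the traciality of $\tau \circ \rE_N$ is not automatic and is essentially what must be proven; all one knows a priori is that $N$ lies in the centralizer of $\tau \circ \rE_N$, and passing from this to full traciality is exactly where the hypothesis $N' \cap M \subset N$ has to do nontrivial work.

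The paper avoids both problems with a direct Radon--Nikodym argument, never cutting down to finite corners. Since $N$ is with expectation in a semifinite algebra, $N$ is itself semifinite as a von Neumann algebra; pick any fns trace $\Tr_N$ on $N$ and form the fns weight $\varphi = \Tr_N \circ \rE_N$ on $M$. Writing $\varphi = \Tr(h \,\cdot\,)$, the fact that $N$ centralizes both $\varphi$ and $\Tr$ forces $({\rm D}\varphi : {\rm D}\Tr)_t = h^{{\rm i}t}$ into $N' \cap M = \cZ(N)$, so $h$ is affiliated with $\cZ(N)$; both conclusions then follow from $\varphi \circ \rE_N = \varphi$ and the nonsingularity of $h$.
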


\begin{proof}
Since $M$ is semifinite and $N \subset M$ is with expectation, $N$ is a semifinite von Neumann algebra. Denote by $\Tr_N$ a fns trace on $N$ and define a fns weight $\varphi$ on $M$ by the formula $\varphi := \Tr_N \circ \rE_N$. Denote by $h$ the unique positive self-adjoint operator affiliated with $M$ such that $\varphi = \Tr(h \, \cdot)$ (see \cite[Theorem 5.12]{PT72}).

Note that $N$ centralizes both weights $\Tr$ and $\varphi$. Then for all $t \in \R$, the Radon--Nikodym derivative $({\rm D}\varphi:{\rm D}\Tr)_t = h^{{\rm i}t}$ commutes with $N$. This shows that $h^{{\rm i}t} \in N' \cap M = \cZ(N)$ for all $t \in \R$ and hence $h$ is affiliated with the center $\cZ(N)$ of $N$. Since $\vphi$ is semifinite on $N$ and $h$ is a nonsingular operator affiliated with $\cZ(N)$, also $\Tr$ is semifinite on $N$. Since $\vphi \circ \rE_N = \vphi$, also $\Tr \circ \rE_N = \Tr$.
\end{proof}

\begin{cor}
Let $M$ be any diffuse $\sigma$-finite von Neumann algebra. Then the following facts are true:
\begin{itemize}
\item [$(1)$] $M$ is solid if and only if its infinite amplification $M \ovt \rB(\ell^2(\N))$ is solid.

\item [$(2)$] $M$ is stably strongly solid if and only if its infinite amplification is strongly solid.
\end{itemize}
\end{cor}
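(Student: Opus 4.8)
The plan is to prove both equivalences by exploiting the correspondence between amplifications and the crossed-product/stabilization structure, using Lemma \ref{semifinite} and Lemma \ref{lem.prop-PN}(3). For $(1)$, one direction is essentially trivial: if $M \ovt \rB(\ell^2(\N))$ is solid and $N \subset M$ is a diffuse subalgebra with expectation, then $N \ovt \C 1$ is diffuse with expectation in $M \ovt \rB(\ell^2(\N))$, so $(N \ovt \C 1)' \cap (M \ovt \rB(\ell^2(\N))) = (N' \cap M) \ovt \rB(\ell^2(\N))$ is amenable, whence $N' \cap M$ is amenable. For the converse, assume $M$ is solid and let $\cN \subset M \ovt \rB(\ell^2(\N))$ be a diffuse subalgebra with expectation; cutting by a finite projection in $\cN$ (equivalently in a maximal abelian subalgebra of it), one reduces to a finite corner and then can conjugate so that the relevant projection lies in $M \ovt e_{00}$; solidity of $M$ (which passes to finite corners and to $M \ovt e_{00} \cong M$) then gives amenability of the relative commutant, which one transfers back.

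For $(2)$, the forward implication (stably strongly solid $\Rightarrow$ infinite amplification strongly solid) is the main content. Let $\widetilde M = M \ovt \rB(\ell^2(\N))$ and let $A \subset \widetilde M$ be a diffuse amenable subalgebra with expectation; we must show $\cN_{\widetilde M}(A)\dpr$ is amenable. The difficulty is that $A$ is only $\sigma$-finite, not tracial, and its normalizer inside an infinite von Neumann algebra is not directly governed by $\sN_M(\cdot)$. First replace $A$ by $A \vee (A' \cap \widetilde M)$: by part $(1)$, $\widetilde M$ is solid, so $A' \cap \widetilde M$ is amenable and hence so is this enlargement, which is still diffuse and with expectation, and the normalizer only grows; so we may assume $A' \cap \widetilde M = \cZ(A)$. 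Now I would cut down by a nonzero finite (for a fixed fns trace on $\widetilde M$ — note $M$ has no trace, so one must first reduce to the semifinite case: actually one should treat $M$ itself, which is diffuse $\sigma$-finite, via its continuous core, but the cleaner route is to observe that solidity and stable strong solidity are both insensitive to tensoring with $\rB(\ell^2)$ only through the finite-corner picture). Concretely: fix a fns trace $\Tr$ on $\widetilde M$ after passing to a semifinite situation; by Lemma \ref{semifinite}, since $A' \cap \widetilde M \subset A$, the trace restricts to a fns trace on $A$ and is preserved by $\rE_A$. Choose a finite trace projection $p \in A$; then $p\widetilde M p$ is a finite von Neumann algebra (one may arrange $p\widetilde M p \cong M$ if $M$ is a $\rm{II}_1$ factor, or more generally handle it by the reduction $p(M \ovt \rB(\ell^2))p$), $pAp \subset p\widetilde M p$ is diffuse amenable with expectation, and by Lemma \ref{lem.prop-PN}(1) and an analogue of Lemma \ref{lem-modular} (using $A' \cap \widetilde M = \cZ(A)$) the cutdown of $\cN_{\widetilde M}(A)\dpr$ is contained in $\sN_{p\widetilde M p}(pAp)\dpr$. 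Stable strong solidity of the finite corner — which follows from that of $M$ — then gives that $\sN_{p\widetilde M p}(pAp)\dpr$ is amenable, and one propagates amenability back up to $\cN_{\widetilde M}(A)\dpr$ using that $\bigvee_p p$ exhausts the identity and that amenability passes from a generating family of corners. For the reverse implication, apply Lemma \ref{lem.prop-PN}(3) directly: if $\widetilde M$ is strongly solid and $A \subset M$ is diffuse amenable with expectation, then $A \ovt \rB(\ell^2(\N)) \subset \widetilde M$ is diffuse amenable with expectation, its normalizer $\cN_{\widetilde M}(A \ovt \rB(\ell^2(\N)))\dpr = \sN_M(A)\dpr \ovt \rB(\ell^2(\N))$ is amenable, hence $\sN_M(A)\dpr$ is amenable.

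The main obstacle I anticipate is the bookkeeping in the forward direction of $(2)$: passing from the $\sigma$-finite, non-tracial setting of $A \subset \widetilde M$ to a tracial finite corner where Lemma \ref{lem.prop-PN} and the stable-strong-solidity hypothesis apply, and in particular justifying that the cutdown of a normalizing unitary of $A$ lands in the stable normalizer of $pAp$ — this is exactly where the reduction $A' \cap \widetilde M = \cZ(A)$ and a crossed-product argument in the spirit of Lemma \ref{lem-modular} are needed — and then reversing this reduction to recover amenability of the full normalizer. One must also be slightly careful that $M$ itself may be of type $\rm{III}$, so strictly speaking "stably strongly solid" for $M$ is being used through its behavior under tensoring with $\rB(\ell^2(\N))$ and cutting by finite projections of a fns trace on $M \ovt \rB(\ell^2(\N))$; the solidity input from $(1)$ and Lemma \ref{semifinite} are what make this legitimate.
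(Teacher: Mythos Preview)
Your overall strategy for (2) is right --- enlarge $A$ so that $A' \cap \widetilde M = \cZ(A)$, apply Lemma~\ref{semifinite} to get finite trace projections in $A$, and cut down to apply stable strong solidity --- but you are missing the reduction that makes the argument clean, and without it your worries about the type~${\rm III}$ case are real and unresolved.

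The paper begins by reducing to the case where $M$ is a ${\rm II_1}$ factor. The point is threefold: direct sums pose no problem; a solid von Neumann algebra with diffuse center is automatically amenable (take $\cZ(M)$ itself as the diffuse subalgebra); and for properly infinite $M$ one has $M \ovt \rB(\ell^2(\N)) \cong M$ while Lemma~\ref{lem.prop-PN}(2,3) already identifies stable strong solidity with strong solidity. After this reduction there is a canonical fns trace $\Tr = \tau \ot \Tr_{\rB(\ell^2(\N))}$ on $\widetilde M$, every finite trace projection $p$ gives $p\widetilde M p \cong M$, and all your concerns about continuous cores and type~${\rm III}$ bookkeeping evaporate. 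You flagged exactly this as the ``main obstacle'' but did not see how to remove it.

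Two smaller points. First, your argument for the converse in (1) is too vague: you speak of cutting by a finite projection in $\cN$, but without the ${\rm II_1}$ reduction there is no reason such a projection exists, and no trace with which to measure it. The paper instead passes to a diffuse abelian $A \subset \cN$, sets $Q = A' \cap M_\infty$, observes $Q' \cap M_\infty \subset Q$, and applies Lemma~\ref{semifinite} to $Q$ (not to $\cN$) to obtain the projections. Second, you do not need anything in the spirit of Lemma~\ref{lem-modular} for the cutdown step in (2): Lemma~\ref{lem.prop-PN}(1) already says $p\,\sN_{\widetilde M}(A)\,p = \sN_{p\widetilde M p}(pAp)$, and since $\cN_{\widetilde M}(A) \subset \sN_{\widetilde M}(A)$ this immediately gives $p\,\cN_{\widetilde M}(A)\dpr\, p \subset \sN_{p\widetilde M p}(pAp)\dpr$.
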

\begin{proof}
First, observe that a direct sum of von Neumann algebras is solid (resp.\ (stably) strongly solid) if and only if each direct summand is solid (resp.\ (stably) strongly solid). Next, any solid von Neumann algebra with diffuse center is amenable. Also, the notions of stable strong solidity and strong solidity coincide for properly infinite von Neumann algebras by Lemma \ref{lem.prop-PN}(2, 3). Therefore, we only need to consider the case where $M$ is a ${\rm II_1}$ factor. Denote by $M_\infty := M \ovt \rB(\ell^2(\N))$ the infinite amplification of $M$ equipped with the canonical fns trace $\Tr := \tau \otimes \Tr_{\rB(\ell^2(\N))}$.

(1) If $M_\infty$ is solid then $M$ is solid as well since $M \subset M_\infty$ is with expectation and solidity is preserved under taking diffuse subalgebras with expectation. Assume now that $M$ is solid and take a diffuse subalgebra $N \subset M_\infty$ with expectation. Take a diffuse abelian subalgebra $A \subset N$ with expectation. To prove that $N' \cap M_\infty$ is amenable, it is sufficient to prove that $A' \cap M_\infty$ is amenable since $N' \cap M_\infty \subset A' \cap M_\infty$ is with expectation.

Since $A$ is abelian, we have $A \subset A' \cap M_\infty:= Q$ and hence $Q' \cap M_\infty \subset Q$.  Then Lemma \ref{semifinite} implies that the semifinite trace $\Tr$ on $M_\infty$ remains semifinite on $Q$.  Since $Q$ is diffuse, $\Tr|_Q$ is semifinite and $\Tr(1) = +\infty$, we may take a sequence of pairwise orthogonal projections $p_n \in Q$ such that $\Tr(p_n) = 1$ for all $n$ and $\sum_n p_n = 1$. Then for all $n$, we have that $p_nQp_n = (Ap_n)' \cap p_nM_\infty p_n$. Since $p_nM_\infty p_n \cong M$ is solid, we have that $p_nQp_n$ is amenable for all $n$ and we conclude that $Q$ is amenable.

(2) Lemma \ref{lem.prop-PN}(3) shows the ``if'' part. Let us show the ``only if'' part. Assume that $M$ is stably strongly solid and take a diffuse amenable subalgebra $Q \subset M_\infty$ with expectation.

By (1), $M_\infty$ is solid and hence $\cQ := Q \vee (Q' \cap M_\infty)$ is also diffuse amenable and $\cN_{M_\infty}(Q)\dpr$ is contained with expectation inside $\cN_{M_\infty}(\cQ)\dpr$. Hence replacing $Q$ by $\cQ$ if necessary, we may assume that $Q' \cap M_\infty \subset Q$.
Then Lemma \ref{semifinite} implies that the semifinite trace $\Tr$ on $M_\infty$ remains semifinite on $Q$.

Take a sequence of pairwise orthogonal projections $p_n \in Q$ such that $\Tr(p_n) = 1$ for all $n$ and $\sum_n p_n = 1$. Since $p_nM_\infty p_n \cong M$ is stably strongly solid, we have that $\sN_{p_nM_\infty p_n}(p_nQp_n)\dpr$ is amenable for all $n$. Hence $p_n (\cN_{M_\infty}(Q)\dpr) p_n \subset p_n (\sN_{M_\infty}(Q)\dpr) p_n = \sN_{p_nM_\infty p_n}(p_nQp_n)\dpr$ is amenable for all $n$ and we conclude that $\cN_{M_\infty}(Q)\dpr$ is amenable as well.
\end{proof}


\bibliographystyle{plain}

\begin{thebibliography}{VDN92}

\bibitem[AD93]{AD93} {\sc C. Anantharaman-Delaroche}, {\it Amenable correspondences and approximation properties for von Neumann algebras}. Pacific J. Math. {\bf 171} (1995), 309--341.

\bibitem[BHR12]{BHR12} {\sc R. Boutonnet, C. Houdayer, S. Raum}, {\it Amalgamated free product type ${\rm III}$ factors with at most one Cartan subalgebra.} Compos. Math. {\bf 150} (2014), 143--174.

\bibitem[Co72]{Co72} {\sc A. Connes}, {\it Une classification des facteurs de type ${\rm III}$.} Ann. Sci. \'{E}cole Norm. Sup. {\bf 6} (1973), 133--252.

\bibitem[Co74]{Co74} {\sc A. Connes}, {\it Almost periodic states and factors of type ${\rm III_1}$.} J. Funct. Anal. {\bf 16} (1974), 415--445.

\bibitem[Co75]{Co75} {\sc A. Connes}, {\it Classification of injective factors. Cases ${\rm II_1}$, ${\rm II_\infty}$, ${\rm III_\lambda}$, $\lambda \neq 1$.} Ann. of Math. {\bf 74} (1976), 73--115.

\bibitem[FSW10]{FSW10} {\sc J. Fang, R. Smith, S. White}, {\it Groupoid normalisers of tensor products: infinite von Neumann algebras.} J. Operator Theory {\bf 69} (2013), 545--570.

\bibitem[Ha85]{Ha85} {\sc U. Haagerup}, {\it Connes' bicentralizer problem and uniqueness of the injective factor of type ${\rm III_1}$.} Acta Math. {\bf 69} (1986), 95--148.

\bibitem[Ha15]{Ha15} {\sc B. Hayes}, {\it $1$-bounded entropy and regularity problems in von Neumann algebras.} To appear in Int. Math. Res. Not. IMRN. {\tt  arXiv:1505.06682}

\bibitem[Ho08]{Ho08} {\sc C. Houdayer}, {\em Structural results for free Araki--Woods factors and their continuous cores.} J. Inst. Math. Jussieu {\bf 9} (2010), 741--767.

\bibitem[Ho09]{Ho09} {\sc C. Houdayer}, {\it Strongly solid group factors which are not interpolated free group factors.} Math. Ann. {\bf 346} (2010), 969--989.

\bibitem[HI15]{HI15} {\sc C. Houdayer, Y. Isono}, {\it Unique prime factorization and bicentralizer problem for a class of type ${\rm III}$ factors.} Adv. Math. {\bf 305} (2017), 402--455.

\bibitem[HR14]{HR14} {\sc C. Houdayer, S. Raum}, {\it Asymptotic structure of free Araki--Woods factors.} Math. Ann. {\bf 363} (2015), 237--267.

\bibitem[HR10]{HR10} {\sc C. Houdayer, \'E. Ricard}, {\it Approximation properties and absence of Cartan subalgebra for free Araki--Woods factors.} Adv. Math. {\bf 228} (2011), 764--802.

\bibitem[HU15]{HU15} {\sc C. Houdayer, Y. Ueda}, {\it Asymptotic structure of free product von Neumann algebras.} Math. Proc. Cambridge Philos. Soc. {\bf 161} (2016), 489--516.

\bibitem[Io12]{Io12} {\sc A. Ioana}, {\it Cartan subalgebras of amalgamated free product $\rm{II}_1$ factors.} Ann. Sci. \'Ecole Norm. Sup. {\bf 48} (2015), 71--130.

\bibitem[JP81]{JP81} {\sc V.F.R. Jones, S. Popa}, {\it Some properties of MASAs in factors.} In {\it Invariant subspaces and other topics (Timi{\c s}oara/Herculane, 1981)}, Operator Theory: Adv. Appl. {\bf 6}, Birkh\"{a}user, Basel-Boston, 1982, pp.\ 89--102.

\bibitem[Kr75]{Kr75} {\sc W. Krieger}, {\it On ergodic flows and the isomorphism of factors.} Math. Ann. {\bf 223} (1976), 19--70.

\bibitem[MV13]{MV13} {\sc N. Meesschaert, S. Vaes}, {\it Partial classification of the Baumslag-Solitar group von Neumann algebras.} Doc. Math. {\bf 19} (2014), 629--645.

\bibitem[MvN43]{MvN43} {\sc F. Murray, J. von Neumann}, {\it Rings of operators.} ${\rm IV}$.  Ann. of Math. {\bf 44} (1943), 716--808.

\bibitem[Oz03]{Oz03} {\sc N. Ozawa}, {\it Solid von Neumann algebras.} Acta Math. {\bf 192} (2004), 111--117.

\bibitem[Oz10]{Oz10} {\sc N. Ozawa}, {\it Examples of groups which are not weakly amenable.} Kyoto J. Math. {\bf 52} (2012), 333--344.

\bibitem[OP07]{OP07} {\sc N. Ozawa, S. Popa}, {\it On a class of $\rm{II}_1$ factors with at most one Cartan subalgebra.} Ann. of Math. {\bf 172} (2010), 713--749.

\bibitem[PT72]{PT72} {\sc G. Pedersen, M. Takesaki}, {\it The Radon--Nikodym theorem for von Neumann algebras.} Acta Math. {\bf 130} (1973), 53--87.

\bibitem[Po01]{Po01} {\sc S. Popa}, {\it On a class of type ${\rm II_1}$ factors with Betti numbers invariants.} Ann. of Math. {\bf 163} (2006), 809--899.

\bibitem[Po03]{Po03} {\sc S. Popa}, {\it Strong rigidity of $\rm II_1$ factors arising from malleable actions of w-rigid groups, ${\rm I}$.} Invent. Math. {\bf 165} (2006), 369--408.

\bibitem[Po06]{Po06} {\sc S. Popa}, {\it On the superrigidity of malleable actions with spectral gap.} J. Amer. Math. Soc. {\bf 21} (2008), 981--1000.

\bibitem[PV11]{PV11} {\sc S. Popa, S. Vaes}, {\it Unique Cartan decomposition for $\rm{II}_1$ factors arising from arbitrary actions of free groups.} Acta Math. {\bf 212} (2014), 141--198.

\bibitem[Sh96]{Sh96} {\sc D. Shlyakhtenko}, {\it Free quasi-free states.} Pacific J. Math. {\bf 177} (1997), 329--368.

\bibitem[Sh97a]{Sh97a} {\sc D. Shlyakhtenko}, {\it Some applications of freeness with amalgamation.} J. Reine Angew. Math. {\bf 500} (1998), 191--212.

\bibitem[Sh97b]{Sh97b} {\sc D. Shlyakhtenko}, {\it $A$-valued semicircular systems.} J. Funct. Anal. {\bf 166} (1999), 1--47.

\bibitem[Sh02]{Sh02} {\sc D. Shlyakhtenko}, {\it On the classification of full factors of type ${\rm III}$.} Trans. Amer. Math. Soc. {\bf 356} (2004), 4143--4159.

\bibitem[Si10]{Si10} {\sc T. Sinclair}, {\it Strong solidity of group factors from lattices in $\SO(n,1)$ and $\SU(n,1)$.} J. Funct. Anal. {\bf 260} (2011), 3209--3221.

\bibitem[Ta03]{Ta03} {\sc M. Takesaki}, {\it Theory of operator algebras. ${\rm II}$.} Encyclopaedia of Mathematical Sciences, {\bf 125}. Operator Algebras and Non-commutative Geometry, 6. Springer-Verlag, Berlin, 2003. xxii+518 pp.

\bibitem[Va04]{Va04} {\sc S. Vaes}, {\it \'{E}tats quasi-libres libres et facteurs de type ${\rm III}$ (d'apr\`{e}s D.\ Shlyakhtenko).} S\'{e}minaire Bourbaki, expos\'{e} 937, Ast\'{e}risque {\bf 299} (2005), 329--350.

\bibitem[Va13]{Va13} {\sc S. Vaes}, {\it Normalizers inside amalgamated free product von Neumann algebras.} Publ. Res. Inst. Math. Sci. {\bf 50} (2014), 695--721.

\bibitem[VV05]{VV05} {\sc S. Vaes, R. Vergnioux}, {\it The boundary of universal discrete quantum groups, exactness and factoriality.} Duke Math. J. {\bf 140} (2007), 35--84.

\bibitem[Vo85]{Vo85} {\sc D.-V. Voiculescu}, {\it Symmetries of some reduced free product C$^*$-algebras.} Operator algebras and Their Connections with Topology and Ergodic Theory, Lecture Notes in Mathematics {\bf 1132}. Springer-Verlag, (1985), 556--588.

\bibitem[Vo95]{Vo95} {\sc D.-V. Voiculescu}, {\it The analogues of entropy and of Fisher's information measure in free probability theory. ${\rm III}$. The absence of Cartan subalgebras.} Geom. Funct. Anal. {\bf 6} (1996), 172--199.

\bibitem[VDN92]{VDN92} {\sc D.-V. Voiculescu, K.J. Dykema, A. Nica}, {\it Free random variables.} CRM Monograph Series {\bf 1}. American Mathematical Society, Providence, RI, 1992.

\end{thebibliography}

\end{document}